\newcommand{\ddate}{\small March 15, 2010}
\newtheorem{dummy}{anything}[section]
\newtheorem{Theorem}[dummy]{Theorem}
\newtheorem{Lemma}[dummy]{Lemma}
\newtheorem{Proposition}[dummy]{Proposition}
\newtheorem{Corollary}[dummy]{Corollary}
\newtheorem{Remark}[dummy]{Remark}
\newtheorem{ccote}[dummy]{}
\newtheorem*{thmA}{Theorem A}
\newtheorem*{thmB}{Theorem B}
\newcommand{\mancqfd}{\hfill \ensuremath{\Box}}
\newcommand{\unm}{\{1,\dots,n\}}
\newcommand{\unmi}{\{1,\dots,n-1\}}
\newcommand{\crit}{{Crit}}
\newcommand{\bbr}{{\mathbb R}}
\newcommand{\bbs}{{\mathbb S}}
\newcommand{\bbc}{{\mathbb C}}
\newcommand{\bbz}{{\mathbb Z}}
\newcommand{\bbn}{{\mathbb N}}
\newcommand{\calb}{{\mathcal B}}
\newcommand{\calc}{{\mathcal C}}
\newcommand{\cali}{{\mathcal I}}
\newcommand{\calj}{{\mathcal J}}
\newcommand{\call}{{\mathcal L}}
\newcommand{\caln}{{\mathcal N}}
\newcommand{\calr}{{\mathcal R}}
\newcommand{\cals}{{\mathcal S}}
\newcommand{\algn}{\mathfrak{N}}
\newcommand{\pcirc}{\kern .7pt {\scriptstyle \circ} \kern 1pt}
\newcommand{\mun}{{-1}}
\newcommand{\nua}[2]{\caln^{#1}_{#2}}
\newcommand{\cha}[2]{\calc^{#1}_{#2}}
\newcommand{\chav}{\cha{}{}}
\newcommand{\calss}{\cals}
\newcommand{\shod}{\dot\cals}
\newcommand{\shoti}{\tilde\cals}
\newcommand{\bcals}{\calb\cals}
\newcommand{\scr}{\scriptscriptstyle}
\renewcommand{\:}{\colon}
\newcommand{\sk}[1]{\vskip #1 mm}
\newcommand{\eqref}[1]{(\ref{#1})}
\newcommand{\hfl}[2]{\smash{\mathop{\hbox to 1 truecm{\kern %
3pt\rightarrowfill\kern 3pt}}%
\limits^{\scriptstyle#1}_{\scriptstyle#2}}}
\newcommand{\cqfd}{\unskip\kern 6pt\penalty 500%
\raise -2pt\hbox{\vrule\vbox to10pt{\hrule width %
4pt\vfill\hrule}\vrule}\smallskip}
\newcommand{\proref}[1]{Proposition~\ref{#1}}
\newcommand{\remref}[1]{Remark~\ref{#1}}
\newcommand{\lemref}[1]{Lemma~\ref{#1}}
\newcommand{\corref}[1]{Corollary~\ref{#1}}
\newcommand{\thref}[1]{Theorem~\ref{#1}}
\newcommand{\secref}[1]{Section~\ref{#1}}
\newcommand{\llangle}[2]{\langle #1 ,#2\rangle}
\newsavebox{\boxJ}\newsavebox{\boxbarJ}
\title{
On the cohomology ring of chains in $\bbr^d$ \\
{\small (Walker's conjecture for chains in $\bbr^d$)}}
\author{Michael FARBER, Jean-Claude HAUSMANN and Dirk SCH\"UTZ}
\date{\ddate}
\begin{document}
\maketitle 

\begin{abstract}
A chain is a configuration in $\bbr^d$ of segments of length $\ell_1,\dots\ell_{n-1}$
joining two points at a fixed distance $\ell_n$. 
When $d\geq 3$, we prove that the spaces of such chains are determined up to diffeomorphism 
by their (${\rm mod\,}2$)-cohomology rings.
\end{abstract}

\begin{center}
{\small MSC classes: 	55R80, 70G40, 57R19}
\end{center}

\section*{Introduction}

For $\ell=(\ell_1,\dots,\ell_n)\in\bbr_{>0}^n$ and $d$ a positive integer, define the subspace
$\cha{n}{d}(\ell)$ of $(S^{d-1})^{n-1}$ by
$$
\cha{n}{d}(\ell)=\big\{z=(z_1,\dots,z_{n-1})\in (S^{d-1})^{n-1}
\mid \sum_{i=1}^{n-1}\ell_iz_i=\ell_n\, e_1
\big\} \, ,
$$
where $e_1=(1,0,\dots,0)$ is the first vector of the standard
basis $e_1,\dots,e_d$ of $\bbr^d$.
An element of $\cha{n}{d}(\ell)$, called a {\it chain}, can be visualised 
as a configuration of $(n-1)$-segments in $\bbr^d$, of length
$\ell_1,\dots,\ell_{n-1}$, joining the origin to $\ell_n e_1$.
The vector $\ell$ is called the {\it length-vector}.

\sk{15}
\hskip 50mm
\begin{minipage}{6cm}
\setlength{\unitlength}{.05mm}
\begin{pspicture}(0,-2.1)(0,0)
\put(-2,0){\circle*{0.2}} \uput[-95](-2.1,0){$0$}
\put(2.5,0){\circle*{0.2}} \uput[-88](2.6,0){$\ell_ne_1$}

\uput[135](-1.4,0.3){$\ell_1$}
\uput[45](-0.6,0.1){$\ell_2$}
\uput[45](1.6,0.5){$\ell_{n-1}$}

\psline[linewidth=1pt](-2,0)(-1,0.7)(0.2,-0.6)(1,1.2)(2.5,0)
\end{pspicture}
\end{minipage}

The group $O(d-1)$,
viewed as the subgroup of $O(d)$ stabilising the first axis, acts naturally
(on the left) upon $\cha{n}{d}(\ell)$.
The quotient $SO(d-1)\big\backslash\cha{n}{d}(\ell)$ is the
{\it polygon space} $\nua{n}{d}$, usually defined as
$$
\nua{n}{d}(\ell)=
SO(d)\Bigg\backslash\Big\{z\in (S^{d-1})^n\,\Bigg|\,\sum_{i=1}^n\,\ell_i z_i=0\Big\} \, .
$$
When $d=2$ the space of chains $\cha{n}{2}(\ell)$ coincides with the
polygon space $\nua{n}{2}(\ell)$. Descriptions of several chain and polygon 
spaces are provided in \cite{Ha2}
(see also \cite{Ha} for a classification of $\cha{4}{d}(\ell)$).

A length-vector $\ell\in\bbr_{>0}^n$ is {\it generic}
if $\cha{n}{1}(\ell)=\emptyset$, that is to say there is no collinear chain.
It is proven in e.g.~\cite{Ha} that, for $\ell$ generic, 
$\cha{n}{d}(\ell)$ is a smooth closed
manifold of dimension 
$$
\dim \cha{n}{d}(\ell) = (n-2)(d-1)-1 \, .
$$ 
Another known fact fact is that if $\ell,\ell'\in\bbr_{>0}^n$ satisfy 
$$
(\ell_1',\dots,\l_{n-1}',\ell_n')= (\ell_{\sigma(1)},\dots,\l_{\sigma(n-1)},\ell_n)
$$
for some permutation $\sigma$ of $\unmi$, then $\cha{n}{d}(\ell')$ and $\cha{n}{d}(\ell)$
are $O(d-1)$-equivariantly diffeomorphic, see \cite[1.5]{Ha2}.

A length-vector $\ell\in\bbr_{>0}^n$ is {\it dominated} if $\ell_n\geq \ell_i$ for
all $i<n$.
The goal of this paper is to show that the spaces $\cha{n}{d}(\ell)$ are, for $\ell$ generic
and dominated and when $d\geq 3$, determined up to 
$O(d-1)$-equivariant diffeomorphism by their ${\rm mod}2$-cohomology rings:

\begin{thmA}
Let $d\in\bbn$, $d\geq 3$. 
 Then, the following properties of generic and dominated length-vectors $\ell, \ell'\in\bbr_{>0}^n$ are equivalent:
\renewcommand{\labelenumi}{(\alph{enumi})}
\begin{enumerate}
\item $\cha{n}{d}(\ell)$ and $\cha{n}{d}(\ell')$ are $O(d-1)$-equivariantly diffeomorphic.
\item $H^*(\cha{n}{d}(\ell);\bbz)$ and $H^*(\cha{n}{d}(\ell');\bbz)$ are isomorphic as graded rings.
\item $H^*(\cha{n}{d}(\ell);\bbz_2)$ and $H^*(\cha{n}{d}(\ell');\bbz_2)$ are isomorphic as graded rings.
\end{enumerate}
\end{thmA}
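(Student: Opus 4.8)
Our plan is as follows. The implications (a)$\Rightarrow$(b) and (a)$\Rightarrow$(c) are immediate, since an $O(d-1)$-equivariant diffeomorphism is in particular a homeomorphism; so it remains to establish (b)$\Rightarrow$(a) and (c)$\Rightarrow$(a). We shall concentrate on the implication (c)$\Rightarrow$(a), the mod-$2$ cohomology ring carrying the least information and hence being the hardest case; the integral statement (b)$\Rightarrow$(a) is entirely analogous (and slightly easier), being obtained by running the same argument with $\bbz$ in place of $\bbz_2$ coefficients. The proof of (c)$\Rightarrow$(a) --- in the spirit of the proof of Walker's conjecture for planar polygon spaces --- splits into a geometric reduction and a cohomological one. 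For the geometric reduction: when $\ell$ is generic, the manifold $\cha{n}{d}(\ell)$ depends, up to $O(d-1)$-equivariant diffeomorphism, only on the combinatorial type of $\ell$, that is, on which subsets $J\subseteq\unm$ are \emph{short}, $\sum_{i\in J}\ell_i<\sum_{i\notin J}\ell_i$; this is proved just as for polygon spaces (compare \cite[1.5]{Ha2}, \cite{Ha}). When $\ell$ is moreover dominated, so that $\ell_n$ is its largest entry, this data is conveniently packaged as the \emph{genetic code} of $\ell$, well defined up to the action of the symmetric group $\Sigma_{n-1}$ permuting $\{1,\dots,n-1\}$. It therefore suffices to prove that \emph{the graded ring $H^*(\cha{n}{d}(\ell);\bbz_2)$ determines the genetic code of $\ell$ up to $\Sigma_{n-1}$.}

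For the cohomological side we need an explicit presentation of this ring. Using the realisation of $\cha{n}{d}(\ell)$ as the zero locus inside $(S^{d-1})^{n-1}$ of the closure condition $\sum\ell_i z_i=\ell_n e_1$ --- analysed, say, by a suitable Morse-theoretic, bending-type argument, where the hypothesis $d\geq 3$ is precisely what keeps the relevant generators concentrated in a single positive degree and the relations transparent --- one expects a presentation of $H^*(\cha{n}{d}(\ell);\bbz_2)$ as the quotient of a (mixed polynomial and exterior) algebra on classes $V_1,\dots,V_{n-1}$, one for each of the $n-1$ legs of a chain, together with one or two auxiliary classes, by an ideal $\calj_\ell$ generated by certain universal relations and by one relation for every \emph{long} subset $L\subseteq\{1,\dots,n-1\}$, the latter expressing the partial product $\prod_{i\in L}V_i$ as a prescribed combination of lower-degree monomials. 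In this way the additive structure and cup products of $H^*(\cha{n}{d}(\ell);\bbz_2)$ record exactly which subsets of $\{1,\dots,n-1\}$ are long, and $\Sigma_{n-1}$ acts by permuting the $V_i$.

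The crux --- and the step I expect to be the main obstacle --- is that this presentation is not canonical: the classes $V_i$ are pinned down only up to $\Sigma_{n-1}$ and up to adding multiples of the auxiliary classes (and possibly further low-degree indeterminacy), so an abstract ring isomorphism $\phi\colon H^*(\cha{n}{d}(\ell);\bbz_2)\to H^*(\cha{n}{d}(\ell');\bbz_2)$ need not carry the system $(V_i)$ for $\ell$ to the system $(V_i')$ for $\ell'$. One must therefore characterise \emph{intrinsically}, purely in ring-theoretic terms, the admissible systems of leg classes --- as those elements of the appropriate degree satisfying prescribed multiplicative conditions (nilpotence, annihilator and divisibility relations, the behaviour of the partial products $\prod_{i\in L}V_i$) --- and then show that any two admissible systems differ only by an element of $\Sigma_{n-1}$ together with the allowed indeterminacy. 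Granting this rigidity, $\phi$ carries an admissible system for $\ell$ to one for $\ell'$, hence $\calj_\ell$ to $\calj_{\ell'}$, and comparison of the long-subset relations on the two sides forces the genetic codes of $\ell$ and $\ell'$ to coincide up to $\Sigma_{n-1}$; by the first paragraph this yields (c)$\Rightarrow$(a). I anticipate that the rigidity statement is itself proved by induction on $n$, deleting one leg at a time --- relating $\cha{n}{d}(\ell)$ to $\cha{n-1}{d}$ of a truncated length-vector, and on cohomology to a subquotient ring --- with Poincaré duality for the closed manifold $\cha{n}{d}(\ell)$ (its top class, its dimension $(n-2)(d-1)-1$, and the formula for the fundamental class in terms of the $V_i$) furnishing the bookkeeping that removes the residual ambiguities; the genuinely combinatorial heart of the matter is the verification that this inductive recovery returns the whole genetic code.
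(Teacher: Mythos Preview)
Your overall strategy --- reduce to combinatorics of short subsets, then recover those combinatorics from the cohomology ring --- is correct and matches the paper's. But the execution you sketch is both more complicated than needed and leaves the decisive step (your ``rigidity'' of the leg classes) as an unproven expectation; as written this is a plan rather than a proof.

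The paper bypasses your hard step entirely by two observations you are missing. First, when $d\geq 3$ the subring $H^{(d-1)*}(\cha{n}{d}(\ell);\bbz_2)$, consisting of classes in degrees divisible by $d-1$, is an \emph{intrinsically defined} subring of the full cohomology --- no choice of leg classes, no auxiliary generators, no indeterminacy to quotient out. The paper shows (Theorem~B, via Propositions~\ref{LHCViso} and~\ref{togub}) that this subring is canonically the face ring $\Lambda(\shoti(\ell)) = \bbz_2[X_1,\dots,X_{n-1}]/\cali(\shoti(\ell))$, where the ideal is generated by the $X_i^2$ and the non-face monomials. Second, the rigidity you seek is then an immediate consequence of a theorem of Gubeladze: any graded ring isomorphism between face rings $\Lambda(\Delta)\cong\Lambda(\Delta')$ is induced by a simplicial isomorphism $\Delta\cong\Delta'$. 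No induction on $n$, no Poincar\'e duality bookkeeping, no intrinsic characterisation of admissible systems is required.

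For (b)$\Rightarrow$(c), the paper simply observes (Theorem~\ref{T.cohcha}) that under the dominated hypothesis the integral cohomology is torsion-free, so the integral ring determines the mod-$2$ ring. Your suggestion to rerun the argument over $\bbz$ would work too, but is unnecessary.

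In short: the presentation you anticipate is more elaborate than what is needed, and the ``crux'' you flag as the main obstacle dissolves once one isolates the degree-$(d-1)*$ subring and invokes Gubeladze. This is precisely where $d\geq 3$ is used: for $d=2$ there is no such grading separation, and one is forced into the delicate arguments of \cite{FHS} and \cite{Sz} that your sketch more closely resembles.
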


In the case $d=2$ we do not know if (c) $\Rightarrow$ (a) although the equivalence (a) $\sim$ (b) is true. This is related to a conjecture of K. ~Walker \cite{Wa} who suggested that
planar polygon spaces are determined by their
integral cohomology rings. 
The conjecture was proven for a large class of length-vectors
in \cite{FHS} and the (difficult) remaining cases were settled in~\cite{Sz}. 
The spatial polygon spaces $\nua{n}{3}$ are also determined up to diffeomorphism
by their ${\rm mod}2$-cohomology ring if $n>4$, see \cite[Theorem~3]{FHS}. 
No such result is known for $\nua{n}{d}$ when $d>3$.

We now give the scheme of the proof of Theorem~A. We first recall that the
$O(d-1)$-diffeomorphism type of $\cha{n}{d}(\ell)$ is determined by $d$ and the sets of
$\ell$-short (or long) subsets, which play an important role all along this paper.
A subset $J$ of $\unm$ is  {\it $\ell$-short}, or just {\it short}, if
$$
\sum_{i\in J}\ell_j < \sum_{i\notin J}\ell_j \, .
$$
The reverse inequality defines {\it long} (or {\it $\ell$-long}) subsets.
Observe that $\ell$ is generic if and only if any subset of $\unm$ is either short or long.

The family of subsets of $\unm$ which are long is denoted by $\call=\call(\ell)$.
Short subsets form a poset under inclusion, which we denote by 
$\calss=\calss(\ell)$. We are interested in the subposet
\begin{equation}\label{defshod}
\shod=\shod(\ell)=\{J\subset\unmi\mid J\cup \{n\}\in\calss\}\, .
\end{equation}


The following lemma is proven in \cite[Lemma~1.2]{Ha2} (this reference uses 
the poset $\cals_n(\ell)=\{J\in\calss\mid n\in J\}$ which is determined by $\shod(\ell)$).
\begin{Lemma}\label{shortdetcha}
Let $\ell,\ell'\in\bbr_{>0}^n$ be generic length-vectors.
Suppose that $\shod(\ell)$ and $\shod(\ell')$ are isomorphic
as simplicial complexes.
Then $\cha{m}{d}(\ell)$ and $\cha{m}{d}(\ell')$ are $O(d-1)$-equivariantly
diffeomorphic. \mancqfd
\end{Lemma}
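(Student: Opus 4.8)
The plan is to reduce the lemma to two facts: (i) the $O(d-1)$-equivariant diffeomorphism type of $\cha{n}{d}(\ell)$ depends only on the open chamber of $\bbr_{>0}^n$ containing $\ell$, where the chambers are the connected components of the complement of the hyperplanes $H_J=\{\lambda\mid\sum_{i\in J}\lambda_i=\sum_{i\notin J}\lambda_i\}$, $J\subseteq\unm$; and (ii) that chamber is recovered from $\shod(\ell)$, up to the link-relabelling symmetry provided by \cite[1.5]{Ha2}. For (ii) I would first observe that the whole poset $\calss(\ell)$ is determined by $\shod(\ell)$: a subset $K$ with $n\in K$ is $\ell$-short exactly when $K\setminus\{n\}\in\shod(\ell)$, while a subset $K\subseteq\unmi$ is $\ell$-short exactly when $\unm\setminus K=(\unmi\setminus K)\cup\{n\}$ is $\ell$-long, i.e.\ exactly when $\unmi\setminus K\notin\shod(\ell)$; here one uses that $\ell$ is generic, so that "not short'' means "long''. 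Since the chamber of $\ell$ is precisely the convex, hence connected, open set $\{\lambda\in\bbr_{>0}^n\mid\calss(\lambda)=\calss(\ell)\}$, it depends only on $\calss(\ell)$, hence only on $\shod(\ell)$.

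Next I would absorb the relabelling. An isomorphism of simplicial complexes $\shod(\ell)\to\shod(\ell')$ is, by definition, a bijection between the vertex sets (which here consist of the $i\in\unmi$ with $\{i,n\}$ short, respectively for $\ell$ and for $\ell'$) carrying faces to faces; since these vertex sets have the same cardinality, it extends to a permutation $\sigma$ of $\unmi$, and since every face lies in the vertex set one gets $\sigma\big(\shod(\ell)\big)=\shod(\ell')$ as families of subsets of $\unmi$. Setting $\ell''=(\ell'_{\sigma(1)},\dots,\ell'_{\sigma(n-1)},\ell'_n)$, a direct computation gives $\shod(\ell'')=\sigma^{-1}\big(\shod(\ell')\big)=\shod(\ell)$, hence $\calss(\ell'')=\calss(\ell)$ and $\ell,\ell''$ lie in the same chamber, while $\cha{n}{d}(\ell'')$ is $O(d-1)$-equivariantly diffeomorphic to $\cha{n}{d}(\ell')$ by \cite[1.5]{Ha2}. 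Thus it is enough to prove (i), and after renaming $\ell''$ we may assume $\calss(\ell)=\calss(\ell')$.

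For (i) I would join $\ell$ to $\ell'$ by the straight segment $t\mapsto\ell(t)$ inside the (convex) common chamber, so that every $\ell(t)$ is generic, and then run an equivariant Ehresmann argument on the universal chain equation. For $F\colon\bbr_{>0}^n\times(S^{d-1})^{n-1}\to\bbr^d$, $F(\lambda,z)=\sum_{i<n}\lambda_iz_i-\lambda_ne_1$, the differential restricted to the $(\lambda_i,z_i)$-block for any single $i<n$ is already onto $\bbr^d$ because $T_{z_i}S^{d-1}\oplus\bbr z_i=\bbr^d$; hence $0$ is a regular value and $\calm:=F^{-1}(0)$ is a smooth manifold. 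Over the chamber the projection $\calm\to\bbr_{>0}^n$ is proper (the fibres lie in the compact $(S^{d-1})^{n-1}$) and a submersion: its differential could fail to be onto only if the $z_i$, $i<n$, were all collinear, but then $\sum_{i<n}\lambda_iz_i=\lambda_ne_1\neq 0$ would make $z$ a collinear chain, contradicting genericity. Since $O(d-1)$ acts on $\calm$ fibrewise while fixing $e_1$, parallel transport along $\ell(t)$ for an $O(d-1)$-invariant Ehresmann connection yields the desired $O(d-1)$-equivariant diffeomorphism $\cha{n}{d}(\ell)\cong\cha{n}{d}(\ell')$. The delicate step is precisely this last one — the regularity and properness that make the (equivariant) Ehresmann theorem applicable, and which is where genericity is essential; the rest is combinatorial bookkeeping. (This is in substance the argument behind \cite[Lemma~1.2]{Ha2}.)
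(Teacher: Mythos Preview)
The paper does not actually prove this lemma: it simply records it with a citation to \cite[Lemma~1.2]{Ha2} and a terminal $\Box$. Your proposal is a correct and complete unpacking of that reference, following the standard chamber/Ehresmann argument: recover $\calss(\ell)$ from $\shod(\ell)$, absorb the simplicial isomorphism by a permutation of $\unmi$ via \cite[1.5]{Ha2}, and then use that the projection from the universal chain variety to the parameter space is a proper $O(d-1)$-equivariant submersion over each chamber. The only point worth tightening is purely expository: when you extend the vertex bijection to a permutation $\sigma$ of all of $\unmi$, you should say explicitly that the non-vertices of $\shod(\ell)$ are sent to the non-vertices of $\shod(\ell')$ (same cardinality), so that $\sigma(\shod(\ell))=\shod(\ell')$ holds on the nose and the reconstruction of $\calss$ from $\shod$ then matches exactly. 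Otherwise your argument is the intended one.
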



Note that $H^*(\cha{n}{d};\bbz_2)=0$ if and only if $\cha{n}{d}=\emptyset$,
which happens if and only if $\{n\}$ is long. We can thus suppose that
$\{n\}$ is short and hence $\shod(\ell)$ is determined by its
subposet
\begin{equation}\label{defshoti}
\shoti=\shoti(\ell) =\shod(\ell) - \{\emptyset\} \, .
\end{equation}
The poset $\shoti$ is an abstract simplicial complex (as a subset of a short subset is short)
with vertex set contained in $\unmi$. 
To prove Theorem A, it then suffices to show that the graded ring $H^*(\cha{n}{d}(\ell);\bbz_2)$ determines $\shoti(\ell)$ when $\ell$ is dominated. 

For a finite simplicial
complex $\Delta$ whose vertex set $V(\Delta)$ is contained in $\unm$, 
consider the graded ring
$$
\Lambda(\Delta)= \bbz_2[X_1,\dots,X_n]\big/\cali(\Delta) \, ,
$$
where $\cali(\Delta)$ is the ideal of the polynomial ring $\bbz_2[X_1,\dots,X_n]$
generated by $X_i^2$ and the monomials $X_{j_1}\cdots X_{j_k}$ when
$\{j_1,\dots,j_k\}$ is not a simplex of $\Delta$. 
Let $\Delta$ and $\Delta'$ be two finite simplicial complexes with vertex sets contained in $\unm$. By a result of J.~Gubeladze, any graded ring isomorphism 
$\Lambda(\Delta)\approx\Lambda(\Delta')$ is induced by a simplicial isomorphism
$\Delta\approx\Delta'$ (see \cite[Example~3.6]{Gu}; for more details, see \cite[Theorem~8]{FHS}). 
Hence, the implication (c) $\Rightarrow$ (a) of Theorem~A will be established if we prove
the following result:

\begin{thmB}
Let $\ell\in\bbr_{>0}^n$ be a generic length-vector which is dominated.
When $d\geq 3$, the subring $H^{(d-1)*}(\cha{n}{d}(\ell);\bbz_2)$
of $H^*(\cha{n}{d}(\ell);\bbz_2)$ is isomorphic to $\Lambda(\shoti(\ell))$.
\end{thmB}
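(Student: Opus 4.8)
The plan is to construct an explicit graded ring isomorphism $\phi\colon\Lambda(\shoti(\ell))\to H^{(d-1)*}(\cha{n}{d}(\ell);\bbz_2)$. For $i\in\unmi$ let $a_i\in H^{d-1}((S^{d-1})^{n-1};\bbz_2)$ be the class pulled back from the generator of $H^{d-1}$ of the $i$-th sphere factor, and let $V_i$ be its restriction along the inclusion $\cha{n}{d}(\ell)\hookrightarrow(S^{d-1})^{n-1}$. Since $a_i^2=0$, also $V_i^2=0$, so $X_i\mapsto V_i$ together with $X_n\mapsto 0$ (legitimate, since $n$ is not a vertex of $\shoti(\ell)$) defines a ring homomorphism $\bbz_2[X_1,\dots,X_n]/(X_i^2)\to H^*(\cha{n}{d}(\ell);\bbz_2)$. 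It factors through $\Lambda(\shoti(\ell))$ provided $V_J:=\prod_{i\in J}V_i$ vanishes whenever $\emptyset\ne J\subseteq\unmi$ and $J\cup\{n\}$ is $\ell$-long. This holds by a transversality argument: $\prod_{i\in J}a_i$ is Poincaré dual in $(S^{d-1})^{n-1}$ to $\prod_{i\in J}\{p_i\}\times\prod_{i\notin J}S^{d-1}$ for arbitrary points $p_i$, so for generically chosen $p_i$ the class $V_J$ is represented by the submanifold of chains $z$ with $z_i=p_i$ for $i\in J$; taking the $p_i$ close enough to $-e_1$ forces $|\ell_n e_1-\sum_{i\in J}\ell_i p_i|>\sum_{i\in\unmi\setminus J}\ell_i$ (possible exactly because $J\cup\{n\}$ is long), so this submanifold is empty and $V_J=0$. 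Thus $\phi$ is a well-defined ring homomorphism, with image the subring of $H^{(d-1)*}$ generated by the $V_i$; note that only genericity has been used so far.

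Next I would show $\phi$ is surjective, i.e.\ that the image of the restriction map $H^*((S^{d-1})^{n-1};\bbz_2)\to H^*(\cha{n}{d}(\ell);\bbz_2)$ is all of $H^{(d-1)*}(\cha{n}{d}(\ell);\bbz_2)$. For this I would use the computation of $H^*(\cha{n}{d}(\ell);\bbz_2)$ from the preceding sections: for $\ell$ generic and dominated it is a free $\bbz_2$-module whose basis is indexed by the short subsets $J'$ of $\unm$ containing $n$, the basis class for $J'=J\cup\{n\}$ lying in degree $|J|(d-1)$ and possessing a Poincaré-dual partner in degree $(n-2-|J|)(d-1)-1$ (a Hausmann--Knutson-type count, obtainable by bending-flow Morse theory); and, crucially, this computation realizes the degree-$|J|(d-1)$ basis classes as the monomials $V_J$, $J\in\shoti(\ell)\cup\{\emptyset\}$ (each is non-zero, being Poincaré dual to a transverse and --- $J\cup\{n\}$ now being short --- non-empty intersection of $\cha{n}{d}(\ell)$ with prescribed-leg-direction loci). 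Since $d\geq 3$, the two families of degrees appearing are the distinct residue classes of $0$ and $d-2$ modulo $d-1$, so $H^{(d-1)*}(\cha{n}{d}(\ell);\bbz_2)$ is spanned by these $V_J$; hence $\phi$ is onto.

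Finally, $\Lambda(\shoti(\ell))$ is spanned over $\bbz_2$ by the monomials $X_J$, $J\in\shoti(\ell)\cup\{\emptyset\}$, so $\dim_{\bbz_2}\Lambda(\shoti(\ell))=|\shoti(\ell)|+1=\#\{J\subseteq\unmi\mid J\cup\{n\}\text{ short}\}=\#\{J'\subseteq\unm\mid n\in J',\ J'\text{ short}\}$, and by the above this equals $\dim_{\bbz_2}H^{(d-1)*}(\cha{n}{d}(\ell);\bbz_2)$. A surjection between $\bbz_2$-vector spaces of equal finite dimension is an isomorphism, so $\phi$ is a graded ring isomorphism; this is Theorem~B.

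The main obstacle is the second step, and specifically the place where the hypothesis that $\ell$ is dominated is genuinely used: one must know that for dominated $\ell$ with $d\geq3$ the ``dual half'' of the basis of $H^*(\cha{n}{d}(\ell);\bbz_2)$ lies only in degrees not divisible by $d-1$, so that no cohomology class in a degree divisible by $d-1$ escapes the span of the sphere classes $V_i$. Dominance --- the closing leg being at least as long as each single other leg --- is what removes the ``symplectic-type'' class whose analogue persists otherwise: for instance $\cha{4}{3}(1,1,1,\frac12)\cong\bbr P^3$, and $\shoti(1,1,1,\frac12)=\{\{1\},\{2\},\{3\}\}$, so that $\Lambda(\shoti(1,1,1,\frac12))$ is $4$-dimensional while $H^{2*}(\bbr P^3;\bbz_2)$ is only $2$-dimensional. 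For $d=2$, where $d-1=1$, this residue separation disappears, $H^{(d-1)*}=H^*$ is too large, and the statement fails --- which is why that case is excluded.
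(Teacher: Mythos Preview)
Your argument is correct and reaches the conclusion by a route closely related to the paper's but more streamlined. Both proofs rest on the same two inputs: the Betti-number computation (\thref{T.cohcha}) and, extracted from its proof, the surjectivity of the restriction $H^{s(d-1)}(Z;\bbz_2)\to H^{s(d-1)}(\chav;\bbz_2)$ for dominated $\ell$. The paper then factors this restriction through the auxiliary bounded manifold $V_d(\ell)$ and a CW-like model $\calr_d(\ell)$, building a somewhat delicate map $\mu\colon\calr_d(\ell)\to V_d(\ell)$ (\lemref{KinV}) to identify $H^*(V_d(\ell))$ with $\Lambda(\shoti(\ell))$; the vanishing of the non-simplex monomials then comes for free from the construction of $\calr_d(\ell)$. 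You instead define $\phi$ directly on generators and verify the relations $V_J=0$ for $J\cup\{n\}$ long by the elementary transversality observation that the prescribed-direction locus with all $p_i$ near $-e_1$ is disjoint from $\chav$. This buys you a shorter argument that avoids the intermediate spaces and the construction of $\mu$ entirely.

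Two small points. First, the parenthetical ``each is non-zero, being Poincar\'e dual to a transverse and non-empty intersection'' is not a valid justification: a non-empty transversal intersection can be null-homologous over $\bbz_2$. Fortunately you do not need it --- once $\phi$ is well-defined and surjective onto $H^{(d-1)*}(\chav;\bbz_2)$, the dimension count forces it to be an isomorphism, and non-vanishing of the $V_J$ follows \emph{a posteriori}. Second, you should make explicit that the surjectivity you invoke is not the statement of \thref{T.cohcha} but a byproduct of its proof: the map $H^{s(d-1)}(Z)\to H^{s(d-1)}(\chav)$ is onto because it is identified, via Poincar\'e--Lefschetz duality, with the quotient map $H_{r(d-1)}(Z)\to H_{r(d-1)}(Z,Z')$ in the long exact sequence of the pair.
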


The implication (b) $\Rightarrow$ (c) follows since, under the condition that $\ell$ is dominated, $H^{*}(\cha{n}{d}(\ell);\bbz)$ is torsion free (see Theorem \ref{T.cohcha}).
Note also Remark \ref{Rcohchawrong} which shows that the condition that $\ell$ is dominated is necessary. 

The proof of Theorem~B is given in \secref{SPthB}.
The preceding sections are preliminaries for this goal. For instance, the computation
of $H^*(\cha{n}{d}(\ell);\bbz)$ as a graded abelian group, is given
in \thref{T.cohcha}.

\section{Robot arms in $\bbr^d$}\label{S.robotarm}

Let
$$
\bbs=\bbs^n_d=\{\rho\:\unm\to S^{d-1}\} \approx (S^{d-1})^n \, .
$$
By post-composition, the orthogonal group $O(d)$ acts smoothly
on the left upon $\bbs$. 
In \cite[\S\,4--5]{FS}, the quotient $W=SO(2)\backslash\bbs^{n}_{2}\approx(S^{1})^{n-1}$ 
is used to get cohomological informations about  $\cha{n}{2}$. In this section, we
extend these results for $d>2$. The quotient $SO(d)\backslash\bbs^{n}_{d}$ is no
longer a convenient object to work with, so we replace it by the fundamental domain for the 
$O(d)$-action given by the submanifold
%
%
$$
Z=Z^n_d=\{\rho\in\bbs\mid \rho(n)=-e_1\} \approx (S^{d-1})^{n-1} \, .
$$
Observe that $Z$ inherits an action of $O(d-1)$.

For a length-vector $\ell=(\ell_1,\dots,\ell_n)\in\bbr_{>0}^n$
the {\it $\ell$-robot arm} is the smooth map $\tilde F_\ell\:\bbs\to\bbr^d$
defined by
$$
\tilde F_\ell(\rho) = \sum_{i=1}^n \ell_i\rho(i) \, .
$$

\sk{20}
\hskip 50mm
\begin{minipage}{6cm}
\setlength{\unitlength}{.05mm}
\begin{pspicture}(0,-2.1)(0,0)
\pscircle(-3,0){0.7}
\put(-3.7,0){\circle*{0.2}}
\put(-3,0.7){\circle*{0.2}}
\put(-2.5,0.5){\circle*{0.2}}
\put(-2.4,-0.35){\circle*{0.2}}

\uput[-90](-3,-0.7){$\rho\in\bbs^{n}_{d}$}
\uput[180](-3.7,0){$\rho(n)$}
\uput[90](-3,0.7){$\rho(2)$}
\uput[-30](-2.4,-0.35){$\rho(1)$}
\uput[45](-2.5,0.5){$\rho(3)$}
\psline[linewidth=1pt]{->}(-0.5,0.3)(0.5,0.3)
\put(-0.2,0.5){$\tilde F_\ell$}
\put(-1.1,-0.9){($n=4$ and $d=2$)}
\put(3,0){\circle*{0.25}}
\uput[-160](2.95,0){$0$}
\psline[linewidth=2pt](3,0)(3.5,-0.866)
\put(2.9,-0.6){$\ell_1$}
\psline[linewidth=2pt](3.5,-0.866)(3.5,0.14)
\put(3.6,-0.45){$\ell_2$}
\psline[linewidth=2pt](3.5,0.14)(4.56,1.2)
\put(4.05,0.4){$\ell_3$}
\psline[linewidth=2pt]{-*}(4.56,1.2)(2.56,1.2)
\put(3.3,1.4){$\ell_n$}
\psline[linewidth=1pt,linestyle=dotted]{<->}(2.56,1.2)(3,0)
\uput[135](2.56,1.2){$\tilde F_\ell(\rho)$}
\end{pspicture}
\end{minipage}

\noindent Observe that the point $\rho\in\cha{4}{d}$ in the above figure
lies in $Z$.
We also define an $O(d)$-invariant smooth map $\tilde f_\ell\:\bbs\to\bbr$ by
$$
\tilde f_\ell(\rho) = -|F_\ell(\rho)|^2 \, .
$$
The restrictions of $\tilde F$ and $\tilde f$ to $Z$ are denoted by $F$ and $f$ respectively.
Observe that
$$
\chav=\cha{n}{d}(\ell)= f^\mun(0)\subset Z \, .
$$
Define
$$
\bbs'=\bbs -\chav \ \hbox{ and } \ Z'=Z-\chav \
$$
The restriction of
$\tilde f$ and $f$ to $\bbs'$ and $Z'$ are denoted by $\tilde f'$ and
$f'$ respectively. 

Denote by $\crit(g)$ be the subspace
of critical points of a real value map $g$.
One has $\crit(\tilde f)=\chav\,\dot\cup\,\crit(\tilde f')$
and $\crit(f)=\chav\,\dot\cup\,\crit(f')$,
where $\dot\cup$ denotes the disjoint union.
It is easy and well known that $\rho\in \crit(\tilde f')$
if and only if $\rho$ is a collinear configuration,
i.e. $\rho(i)=\pm \rho(j)$ for all $i,j\in\unm$.

We will index the critical points of $\tilde f'$ and $f'$ by the long subsets.
For each $J\in\call$, let $\crit_J(\tilde f')\subset \crit(\tilde f')$
be defined by
$$
\crit_J(\tilde f')= \big\{\rho\in\bbs\mid
\kappa_J(j)\rho(j)=\kappa_J(i)\rho(i) \ \hbox{ for all }
 i,j\in\unm
\big\} \, .
$$
where 
$\kappa_{\scr J}\:\unm\to\{\pm 1\}$ the multiplicative
characteristic function of $J$, defined by:
$$
\kappa_{\scr J}(i)=
\left\{\begin{array}{lll}
-1 & \hbox{if } i\in J \\
1 & \hbox{if } i\notin J \, .
\end{array}\right.
$$
In particular, $\kappa_{\scr \bar J}=-\kappa_{\scr J}$ if 
$\bar J$ is the complement of $J$ in $\unm$.
In words, $\crit_J(\tilde f')$ is the space of collinear
configurations $\rho$ which take constant values on $J$ and $\bar J$ and such that $\rho(J)=-\rho(\bar J)$.
The space $\crit_J(\tilde f')$
is a submanifold of $\bbs$ diffeomorphic, via $\tilde F$,
to the sphere in $\bbr^d$ of radius
$\sum_{j\in J}\ell_j - \sum_{j\notin J}\ell_j$
(this number is positive since $J$ is long).
One has
$$
\crit (\tilde f')=\dot{\bigcup_{J\in\call}}
\crit_J(\tilde f') \, .
$$
The $O(d)$-invariance of $\tilde f'$ has two consequences:
each sphere $\crit_J(\tilde f')$ intersects $Z$ transversally 
in the single point $\rho_{\scr J}$ and $\crit(f')=\crit(\tilde f') \cap Z$.
Hence
\begin{equation}\label{eqcrif}
\crit (f')=\{\rho_{\scr J}\mid J\in\call\} 
\end{equation}
(note that $\rho_{\scr J}\notin\chav$ as $\ell$ is generic).
As $\rho(n)=-e_1$ if $\rho\in Z$, the critical points $\rho_{\scr J}$
are of two types, depending on $n\in J$ or not:
\begin{equation}\label{rhoj}
\rho_{\scr J}(i)=
\left\{\begin{array}{rll}
\kappa_J(i)\, e_1 & \hbox{if } n\in J \\
-\kappa_J(i)\, e_1 &\hbox{if }  n\notin J \, .
\end{array}\right.
\end{equation}

\begin{Lemma}\label{fmorse}
The map $f'\:Z'\to (-\infty,0)$ is a proper Morse function with
set of critical points $\{\rho_{\scr J}\mid J\in\call\}$.
The index of $\rho_{\scr J}$ is $(d-1)(n-|J|)$.
\end{Lemma}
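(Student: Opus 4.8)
The plan is to show directly that $f'$ is Morse by computing, at each collinear critical configuration $\rho_{\scr J}$, the Hessian of $f'$ on the tangent space of $Z'$, and to read off the index from its signature. First I would fix the identification $Z\approx(S^{d-1})^{n-1}$ coming from $\rho\mapsto(\rho(1),\dots,\rho(n-1))$, so that the tangent space at $\rho$ decomposes as $\bigoplus_{i=1}^{n-1}T_{\rho(i)}S^{d-1}$. Writing $F(\rho)=\sum_{i=1}^{n-1}\ell_i\rho(i)-\ell_ne_1$ and $f=-|F|^2$, a first-variation computation gives $df_\rho(v)=-2\langle F(\rho),\,\sum_i\ell_i v_i\rangle$, which vanishes for all admissible $v$ exactly when $F(\rho)$ is orthogonal to every $T_{\rho(i)}S^{d-1}$, i.e. when $F(\rho)$ is parallel to each $\rho(i)$; together with $\rho(n)=-e_1$ this recovers the collinearity condition and the list $\{\rho_{\scr J}\mid J\in\call\}$ already established in \eqref{eqcrif}. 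Properness of $f'$ is immediate since $Z'$ is the complement of the compact set $\chav$ in the compact manifold $Z$, and $f'$ extends continuously to $Z$ with $(f')^{-1}$ of any compact subset of $(-\infty,0)$ closed in $Z$, hence compact.

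The heart of the argument is the Hessian computation at $\rho_{\scr J}$. By \eqref{rhoj}, every $\rho_{\scr J}(i)$ equals $\pm e_1$, and $F(\rho_{\scr J})=c_J e_1$ with $c_J=\sum_{j\in J}\ell_j-\sum_{j\notin J}\ell_j>0$ (the radius appearing in the description of $\crit_J(\tilde f')$). For a tangent vector $v=(v_1,\dots,v_{n-1})$ with $v_i\in T_{\rho(i)}S^{d-1}\cong e_1^\perp\subset\bbr^d$, the second variation of $-|F|^2$ along geodesics $\rho(i)\mapsto\cos(t|v_i|)\rho(i)+\dots$ on each sphere contributes two terms: a "Gauss–Weingarten" term $-2\langle F(\rho),\sum_i\ell_i\,\ddot\gamma_i(0)\rangle$ coming from the curvature of $S^{d-1}$, which equals $+2\sum_i \ell_i\langle F(\rho_{\scr J}),\rho_{\scr J}(i)\rangle |v_i|^2$, and a term $-2|\sum_i\ell_i v_i|^2$ from differentiating $\sum\ell_i v_i$. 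Since $v_i\perp e_1$ and $F(\rho_{\scr J})=c_Je_1$, the second term is $-2\big|\sum_i\ell_i v_i\big|^2$ acting on the $(d-1)$-dimensional space $e_1^\perp$, while the first is $2\sum_i \ell_i\,\varepsilon_i\,c_J\,|v_i|^2$ where $\varepsilon_i=\langle\rho_{\scr J}(i),e_1\rangle=\pm1$ according to the sign rule in \eqref{rhoj}. Here $\varepsilon_i=+1$ precisely when $i\notin J$ (combining the two cases of \eqref{rhoj}), so the first term is positive-definite on the coordinates $i\notin J$ and negative-definite on the coordinates $i\in J$, $i\le n-1$.

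From here the index is extracted coordinate-block by coordinate-block. Restricting to the subspace where $v_i=0$ for all $i\notin J$: the quadratic form $-2|\sum_{i\in J}\ell_i v_i|^2 + 2c_J\sum_{i\in J}\ell_i\varepsilon_i|v_i|^2$ is negative-definite on that subspace because $\varepsilon_i=-1$ there — both summands are $\le 0$ and the second is strictly negative off $0$ — contributing $(d-1)\cdot|J\cap\unmi|$ to the index; and since $n\in J$ or not only shifts which $e_1^\perp$-directions appear but not the count, one checks $|J\cap\unmi| = n-|J|$ when $n\notin J$ and $=n-|J|$ again after accounting for the deleted index $n$, giving index $(d-1)(n-|J|)$. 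The complementary directions (those with some $v_i\ne 0$ for $i\notin J$) must then be shown to give a positive-definite complement, i.e. the Hessian is nondegenerate: this is where one uses that $\ell$ is generic, so that no further collinear degeneration can occur and the two terms cannot cancel on any nonzero vector supported on $\{i\notin J\}$ — a short estimate with Cauchy–Schwarz, using $c_J>0$ and $\sum_{i\notin J}\ell_i < c_J/1$-type bounds from longness of $J$. The main obstacle is precisely this nondegeneracy/positive-definiteness check on the "short side" of the splitting: getting the inequality between $|\sum_{i\notin J}\ell_i v_i|^2$ and $c_J\sum_{i\notin J}\ell_i|v_i|^2$ to go the right way, which should follow from $c_J\ge \ell_i$ for each individual $i\notin J$ (a consequence of $J$ being long and $\ell$ generic) together with Cauchy–Schwarz, but requires care. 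Once nondegeneracy is in hand, the index count above completes the proof. \mancqfd
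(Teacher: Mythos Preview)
Your overall strategy --- a direct Hessian computation on $Z\approx(S^{d-1})^{n-1}$ --- is exactly what the paper has in mind (it simply cites \cite[proof of Theorem~3.2]{Ha} for the computation).  However, the execution contains a sign error that propagates and makes the index count wrong.

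The curvature term in the Hessian is $2\sum_{i<n}\ell_i\,\langle F(\rho_{\scr J}),\rho_{\scr J}(i)\rangle\,|v_i|^2$.  A direct check from \eqref{rhoj} shows that $F(\rho_{\scr J})=-c_Je_1$ when $n\in J$ and $F(\rho_{\scr J})=+c_Je_1$ when $n\notin J$; in \emph{both} cases one gets $\langle F(\rho_{\scr J}),\rho_{\scr J}(i)\rangle=+c_J$ for $i\in J$ and $-c_J$ for $i\notin J$.  Hence the diagonal part of the Hessian is \emph{positive} on the $J$-coordinates and \emph{negative} on the $\bar J$-coordinates --- the opposite of what you wrote.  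Consequently the obviously-negative block is the $\bar J'$-block ($\bar J'=\{1,\dots,n-1\}\setminus J$), of dimension $|\bar J'|$, not the $J$-block.  Your sentence ``one checks $|J\cap\unmi|=n-|J|$'' is also arithmetically false: $|J\cap\unmi|$ equals $|J|$ or $|J|-1$, never $n-|J|$ in general.

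Once the signs are corrected the argument still needs work.  One has $|\bar J'|=n-|J|$ when $n\in J$, but $|\bar J'|=n-|J|-1$ when $n\notin J$, so in the second case the $\bar J'$-block alone is one dimension short and you must locate an additional negative direction inside the $J$-block (e.g.\ $v_i\equiv 1$ on $J$ works, since then $q(v)=2(\sum_{J}\ell_i)(c_J-\sum_J\ell_i)=-2(\sum_J\ell_i)(\sum_{\bar J}\ell_i)<0$).  For nondegeneracy and the exact index, your proposed inequality ``$c_J\ge\ell_i$ for each $i\notin J$'' is \emph{not} a consequence of $J$ being long (take $\ell=(2,2,3)$, $J=\{1,2\}$: $c_J=1<\ell_3$), so the Cauchy--Schwarz step you sketch does not go through as stated.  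A clean way to finish is to compute $\det$ of the Hessian (after the substitution $w_i=\sqrt{\ell_i}\,v_i$ it is a rank-one perturbation of a diagonal matrix) and find it equals $\pm c_J^{\,n-2}\ell_n\neq 0$, which gives nondegeneracy; the sign, combined with the negative and positive subspaces already exhibited, then pins down the index as $n-|J|$ in both cases.
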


\begin{proof}
Because $f'$ extends to $f\:(Z,\chav)\to ((-\infty,0],0)$,
the map $f'$ is proper.
We already described $\crit(f')$ in~\eqref{eqcrif}.
The non-degeneracy of $\rho_{\scr J}$
and the computation of its index
are classical in topological robotics using arguments
as in \cite[proof of Theorem~3.2]{Ha}.
\end{proof}

Consider the axial involution $\hat\tau$ on $\bbr^d=\bbr\times\bbr^{d-1}$
defined by $\hat\tau(x,y)=(x,-y)$.
It induces an involution $\tau$ on $\bbs$ and on $Z$.
The maps $\tilde f$ and $f$ are $\tau$-invariant.
Moreover, the critical set of $f'\:Z'\to (-\infty,0)$
coincides with the fixed point set $Z^\tau$.
By \lemref{fmorse} and \cite[Theorem~4]{FS}, this proves the following

\begin{Lemma}\label{fperfect}
The Morse function $f'\:Z'\to (-\infty,0)$ is $\bbz$-perfect, in the sense that
$H_i(Z')$ is free abelian of rank equal to the number of
critical points of index~$i$. Moreover, the induced map
$\tau_*\:H_i(Z')\to H_i(Z')$ is multiplication by $(-1)^i$.
\mancqfd
\end{Lemma}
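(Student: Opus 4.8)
The plan is to verify the hypotheses of \cite[Theorem~4]{FS}, which is tailored to this situation: a proper Morse function invariant under an involution whose fixed-point set is its entire critical set. First I would pass to a compact model. Since $f'$ is proper over $(-\infty,0)$ with only finitely many critical values (\lemref{fmorse}), fix $c_1<0$ strictly above all of them and equip $Z'$ with a $\tau$-invariant Riemannian metric. On $\{c_1\le f'<0\}$ the gradient of $f'$ is nonzero, and renormalizing it so that $f'$ decreases at unit speed produces a flow that stays in the compact sets $\{c_1\le f'\le b\}$ and reaches $\{f'=c_1\}$ in finite time; this gives a $\tau$-equivariant strong deformation retraction of $Z'$ onto the compact manifold-with-boundary $N=\{f'\le c_1\}$, on which $f'$ restricts to a Morse function carrying all the critical points $\rho_{\scr J}$, $J\in\call$, with index $(d-1)(n-|J|)$. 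It then suffices to prove the statement for $N$ and $f'|_N$.

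Next I would set up the Morse complex of $f'|_N$ for a $\tau$-invariant Morse--Smale negative gradient (a $\tau$-invariant metric can be perturbed within the $\tau$-invariant ones to make it Morse--Smale); its generators in degree $i$ are the critical points of index $i$, i.e.\ the $\rho_{\scr J}$ with $(d-1)(n-|J|)=i$. For $\bbz$-perfectness there are two routes. Since $d\ge 3$, all critical indices are multiples of $d-1\ge 2$, hence no two are consecutive integers, so the Morse differential vanishes for lacunary reasons and $H_i(Z')\cong H_i(N)$ is free abelian with basis the index-$i$ critical points; this is the quickest argument. The argument underlying \cite[Theorem~4]{FS}, valid uniformly, instead observes that $\tau$ carries each connecting trajectory to a \emph{different} one (it fixes no non-critical point, and an orientation-preserving involution of an arc fixing its endpoints is the identity) and reverses its incidence sign, so connecting trajectories cancel in pairs and the differential again vanishes over $\bbz$.

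For the action of $\tau$, the point is that $Z^\tau=\crit(f')$ is discrete, so the $(+1)$-eigenspace of $d\tau$ at $\rho_{\scr J}$ is trivial, i.e.\ $d\tau_{\rho_{\scr J}}=-{\rm id}$ on all of $T_{\rho_{\scr J}}Z'$. Hence $\tau$ preserves the unstable manifold $W^u(\rho_{\scr J})\cong\bbr^{\,i}$ with $i=(d-1)(n-|J|)$, acting there with derivative $-{\rm id}$ at $\rho_{\scr J}$, and therefore reverses its orientation by $(-1)^{i}$. So on the Morse complex $\tau$ fixes each degree-$i$ generator up to the sign $(-1)^{i}$; since the differential is zero, $\tau_*=(-1)^{i}$ on $H_i(Z')$, as claimed.

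I expect the one genuinely delicate point to be the incidence-sign bookkeeping behind the pairing of connecting trajectories (needed only if one insists on the uniform argument rather than invoking lacunarity for $d\ge 3$) together with the orientation computation $\tau_*=(-1)^i$; the reduction to the compact model $N$ and the existence of a $\tau$-invariant Morse--Smale gradient are routine, but worth stating explicitly since $Z'$ is non-compact.
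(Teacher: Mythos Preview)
Your proposal is correct and follows essentially the same route as the paper: reduce to a compact model (you take a sublevel set $\{f'\le c_1\}$; the paper removes an open tubular neighbourhood of $\chav$ from $Z$) and then apply \cite[Theorem~4]{FS}, whose key hypothesis holds because $\crit(f')=(Z')^\tau$. The additional content you supply---unpacking the trajectory-pairing and orientation argument behind that theorem and noting the lacunarity shortcut available when $d\ge 3$---is correct but goes beyond what the paper records.
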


(Theorem~4 of \cite{FS} is stated for a Morse function $f\:M\to\bbr$
where $M$ is a compact manifold with boundary. To use it in the proof
of \lemref{fperfect}, just replace $Z'$ by $Z-N$ where $N$ is a small
open tubular neighbourhood of $\chav$.)

We now represent a homology basis for $Z$ and $Z'$ by convenient
closed manifolds. For $J\subset\unm$, define
$$
\bbs_J = \{\rho\in\bbs\mid |\rho(J)|=1\} 
$$
(the condition $|\rho(J)|=1$ is another way to say that $\rho$
is constant on $J$).
The space $\bbs_J$ is a closed submanifold of $\bbs$ diffeomorphic
to $(S^{d-1})^{n-|J|+1}$. As it is $O(d)$-invariant, it
intersects $Z$ transversally. Let
$$
W_J=\bbs_J \cap Z \approx (S^{d-1})^{n-|J|} \, .
$$
The manifold $W_J$ is $O(d-1)$-invariant and then is $\tau$-invariant.
As in Formula~\eqref{rhoj}, the dichotomy ``$n\in J$ or not'' occurs:
\begin{equation}\label{zJ}
W_{J}=
\left\{\begin{array}{lll}
\{\rho\in Z\mid \rho(J)=-e_1\} & \hbox{if } n\in J \\
\{\rho\in Z\mid |\rho(J)|=1\}
&\hbox{if }  n\notin J \, .
\end{array}\right.
\end{equation}
We denote by $[W_J]\in H_{(d-1)(n-|J|)}(Z;\bbz)$ the class represented by $W_J$
(for some chosen orientation of $W_J$).
If $J$ is long, then $W_J\subset Z'$ and we also denote by
$[W_J]$ the class represented by $W_J$ in $H_{(d-1)(n-|J|)}(Z';\bbz)$.
\begin{Lemma}\label{zjbase}
\renewcommand{\labelenumi}{(\alph{enumi})}
\begin{enumerate}
\item $H_*(Z';\bbz)$ is freely generated by the classes $[W_J]$ for $J\in\call$.
\item $H_*(Z;\bbz)$ is freely generated by the classes $[W_J]$ for all
$J\in\unm$ with $n\in J$.
\end{enumerate}
\end{Lemma}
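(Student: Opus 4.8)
The plan is to prove (b) by a direct Künneth computation and (a) by feeding \lemref{fmorse} and \lemref{fperfect} into an intersection-number argument that identifies the classes $[W_J]$ with a Morse-theoretic basis.

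For (b): the map $Z\to(S^{d-1})^{n-1}$, $\rho\mapsto(\rho(1),\dots,\rho(n-1))$, is a diffeomorphism, and by Künneth $H_*((S^{d-1})^{n-1};\bbz)$ is free abelian with basis the external products $\prod_{i\in T}\mu_i$, $T\subseteq\{1,\dots,n-1\}$, where $\mu_i$ is a fundamental class of the $i$-th factor. For $n\in J$, formula~\eqref{zJ} identifies $W_J$ with $\prod_{i\in J,\,i<n}\{-e_1\}\times\prod_{i\notin J}S^{d-1}$, whose fundamental class is $\pm\prod_{i\notin J}\mu_i$. As $J$ ranges over the subsets of $\unm$ containing $n$, the complement $\{1,\dots,n-1\}\setminus J$ ranges over all subsets of $\{1,\dots,n-1\}$, so the $[W_J]$ are, up to sign, exactly the Künneth basis; this gives (b).

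For (a) I would work with the Morse function $f'$ of Lemmas~\ref{fmorse} and~\ref{fperfect}. By \lemref{fperfect}, $H_*(Z';\bbz)$ is free abelian, and by \lemref{fmorse} its rank in degree $(d-1)(n-k)$ equals $\#\{J\in\call\mid|J|=k\}$; this is exactly the number of classes $[W_J]$, $J\in\call$, sitting in that degree, since $\dim W_J=(d-1)(n-|J|)$ and $W_J\subset Z'$ (if $\rho$ is constant on the long set $J$, with common value $w$, then $F(\rho)=\big(\sum_{i\in J}\ell_i\big)w+\sum_{i\notin J}\ell_i\rho(i)$, so $|F(\rho)|\geq\sum_{i\in J}\ell_i-\sum_{i\notin J}\ell_i=:r_J>0$ by the triangle inequality). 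Hence it suffices to relate the $[W_J]$, $J\in\call$, to the Morse basis of $H_*(Z';\bbz)$ from \lemref{fperfect} by a unimodular change of basis in each degree. The geometric core is: \emph{$f|_{W_J}$ attains its maximum $-r_J^2$ only at $\rho_J$}. Indeed, equality in the inequality above forces $\rho(i)=-w$ for every $i\notin J$, and together with $\rho(n)=-e_1$ this determines $\rho=\rho_J$ (compare~\eqref{rhoj}). Consequently $\mathrm{Hess}_{\rho_J}f$ is negative semidefinite on $T_{\rho_J}W_J$; since $\rho_J$ is a nondegenerate critical point of $f'$ of index $\lambda:=(d-1)(n-|J|)=\dim W_J$ (\lemref{fmorse}), the $\lambda$-dimensional subspace $T_{\rho_J}W_J$ meets the positive eigenspace of $\mathrm{Hess}_{\rho_J}f$ only in $0$ and therefore, for dimension reasons, complements it; i.e. $W_J$ is transverse at $\rho_J$ to the stable manifold $W^s(\rho_J)$ of a generic gradient-like vector field for $f'$. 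Moreover, since $W_J\subset\{f\le-r_J^2\}$ meets $\{f=-r_J^2\}$ only at $\rho_J$ while $W^s(\rho_{J'})\subset\{f\ge-r_{J'}^2\}$ meets $\{f=-r_{J'}^2\}$ only at $\rho_{J'}$, the sets $W_J$ and $W^s(\rho_{J'})$ are disjoint whenever $J'\neq J$ and $r_{J'}\le r_J$ (either the sub- and super-level sets are already disjoint, or they overlap only in the common level $\{f=-r_J^2\}$, where the two sets are the distinct points $\rho_J$ and $\rho_{J'}$). Hence the intersection numbers $W_J\cdot W^s(\rho_{J'})$, which compute the coefficients of $[W_J]$ in the Morse basis $\{[W^u(\rho_{J'})]\mid J'\in\call,\ |J'|=|J|\}$ of $H_{(d-1)(n-|J|)}(Z';\bbz)$, form — after ordering $\{J\in\call\mid|J|=k\}$ by decreasing $r_J$ — a triangular matrix with $\pm1$ on the diagonal, hence unimodular. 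Therefore the $[W_J]$, $J\in\call$, constitute a $\bbz$-basis of $H_*(Z';\bbz)$, which proves (a).

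The Künneth computation and the triangle-inequality estimates are routine. The step I expect to be the main obstacle is making the duality between $[W_J]$ and the Morse generator $[W^u(\rho_J)]$ rigorous: since $Z'$ is open, one must either work with Borel--Moore fundamental classes of the properly embedded stable manifolds and their compactifications, or else run the argument through the handle filtration $\{(f')^{-1}(-\infty,c]\}_c$ — showing that $[W_J]$ lies in the filtration stage containing $\rho_J$ and equals, modulo the previous stage, $\pm$ the canonical generator attached to $\rho_J$ (near $\rho_J$ the cycle $W_J$ is a disc transverse to the cocore), and then invoking $\bbz$-perfectness together with the ordering by $r_J$. One must also track orientations so that the diagonal entries are genuine units, and the possible coincidences $r_{J'}=r_J$ with $J'\neq J$ require the level-set argument above rather than a mere comparison of critical values.
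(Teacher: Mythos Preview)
Your proof of (b) is the same as the paper's: identify $Z$ with $(S^{d-1})^{n-1}$ and read off the K\"unneth basis.

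For (a) the paper takes a much shorter path: it simply observes that the family $\{W_J\mid J\in\call\}$ consists of $\tau$-invariant submanifolds satisfying the hypotheses of \cite[Theorem~5]{FS} (see also \cite[Lemma~8]{FS}), and that theorem directly outputs a homology basis. Your argument is different and essentially self-contained: you use the triangle inequality to show that $f|_{W_J}$ has a unique maximum at $\rho_J$, deduce transversality of $W_J$ with the cocore at $\rho_J$ and disjointness from the cocores at the other $\rho_{J'}$ with $r_{J'}\le r_J$, and then run a handle-filtration/unimodularity argument on top of the $\bbz$-perfectness of \lemref{fperfect}. This is correct; the level-set argument you give for the borderline case $r_{J'}=r_J$, $J'\neq J$, is exactly what is needed, and your transversality deduction from nondegeneracy of the Hessian is sound. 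The trade-off is that the paper's route is a one-line citation packaging the $\tau$-equivariant Morse theory of \cite{FS}, whereas your route avoids any further use of $\tau$ beyond \lemref{fperfect} at the cost of redoing, in this specific geometry, what \cite[Theorem~5]{FS} does in general. The technical caveat you flag---making the pairing of $[W_J]$ with the Morse generators rigorous on the open manifold $Z'$---is real but standard, and your suggested fix via the sublevel filtration $\{(f')^{-1}(-\infty,c]\}$ together with $\bbz$-perfectness is the right one.
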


\begin{proof}
For (a), we invoke \cite[Theorem~5]{FS}. Indeed, the 
the collection of $\tau$-invariant manifolds
$\{W_J\mid J\in\call\}$ satisfies all the hypotheses of this theorem
(see also \cite[Lemma~8]{FS}).

Let $K=\{1,\dots,n-1\}$. The restriction of $\rho\in Z$ to $K$ gives
a diffeomorphism from $h\:Z\stackrel{\approx}{\to}\bbs_K\approx (S^{d-1})^{n-1}$.
By the K\"unneth formula, $H_*(\bbs_K;\bbz)$ is freely generated by
the classes $[W_I]$ for all $I\subset K$. If $n\in J$,  $h(W_J)=W_{J-\{n\}}$,
which proves (b).
\end{proof}

Let $J,J'\subset\unm$. When $|J|+|J'|=n+1$, one has
$\dim W_J + \dim W_{J'}=\dim Z=\dim Z'$ and the intersection number
$[W_J]\cdot [W_{J'}]\in\bbz$ is defined (intersection in $Z$). We shall use the following
formulae.

\begin{Lemma}\label{intnum}
$J,J'\subset\unm$ with $|J|+|J'|=n+1$.
Then
$$
[W_J]\cdot [W_{J'}] =
\left\{\begin{array}{lll}
\pm 1 & \hbox{if } |J\cap J'|=1 \\
0 & \hbox{if } |J\cap J'|>1 \hbox{ and } n\in J\cup J'\, .
\end{array}\right.
$$
\end{Lemma}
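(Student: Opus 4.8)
The plan is to carry out the whole computation inside $Z$, using the additive basis of $H_*(Z;\bbz)$ furnished by \lemref{zjbase}(b) --- the classes $[W_I]$ with $I\subseteq\unm$ and $n\in I$ --- together with the diffeomorphism $h\:Z\to (S^{d-1})^{n-1}$ given by restriction to $K=\{1,\dots,n-1\}$, and the K\"unneth formula. A first combinatorial remark: since $|J|+|J'|=n+1$ one has $|J\cap J'|=|J|+|J'|-|J\cup J'|\ge(n+1)-n=1$, and $|J\cap J'|=1$ forces $|J\cup J'|=n$, i.e. $J\cup J'=\unm$. As the intersection number is symmetric up to sign and $n\in J\cup J'$, I may and will assume $n\in J$, so that $[W_J]$ is one of the basis classes above.

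The first step is to observe that this basis is self-dual up to sign: for $I,I'$ with $n\in I\cap I'$ and $|I|+|I'|=n+1$ one has $[W_I]\cdot[W_{I'}]=\pm 1$ if $I\cap I'=\{n\}$ and $=0$ otherwise. Indeed, under $h$ the manifold $W_I$ (for $n\in I$) is the coordinate sub-product of $(S^{d-1})^{n-1}$ which is a point in the factors indexed by $I\setminus\{n\}$ and a full sphere in the remaining factors; its Poincar\'e dual is the product of the degree-$(d-1)$ generators indexed by $I\setminus\{n\}$, and the cup product of two such monomials is a generator of the top cohomology group exactly when $I\setminus\{n\}$ and $I'\setminus\{n\}$ are disjoint (equivalently $I\cap I'=\{n\}$; they then partition $K$ by cardinality), and is $0$ otherwise.

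Next I would expand $[W_{J'}]$ in this basis. If $n\in J'$, then $[W_{J'}]$ is already a basis class. If $n\notin J'$, then under $h$ the manifold $W_{J'}$ is a ``diagonal'' copy of $(S^{d-1})^{n-|J'|}$ --- the configurations constant on $J'$ and arbitrary off $J'\cup\{n\}$ --- so the diagonal formula $\Delta_*[S^{d-1}]=\sum_i[S^{d-1}]_i$ applied to the $J'$-factors gives
$$
[W_{J'}]=\sum_{i\in J'}\,[W_{\{n\}\cup(J'\setminus\{i\})}]\, .
$$
Writing $[W_{J'}]=\sum_{n\in I}c_I\,[W_I]$, only the $I$ with $|I|=|J'|$ occur (for degree reasons), and by the self-duality above $[W_J]\cdot[W_I]$ is nonzero only for the unique such $I$ with $I\cap J=\{n\}$, namely $I_0:=\{n\}\cup(\unm\setminus J)$, which indeed has cardinality $|J'|$. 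Thus $[W_J]\cdot[W_{J'}]=\pm c_{I_0}$, and it remains to read $c_{I_0}$ off the two cases: if $n\in J'$ then $c_{I_0}=1\iff J'=I_0\iff J\cap J'=\{n\}$; if $n\notin J'$ then $c_{I_0}$ is the number of $i\in J'$ with $J'\setminus\{i\}=\unm\setminus J$, and such an $i$ --- necessarily the unique element of $J\cap J'$ --- exists exactly when $J\cup J'=\unm$, i.e. when $|J\cap J'|=1$. In every case $[W_J]\cdot[W_{J'}]=\pm 1$ when $|J\cap J'|=1$ and $=0$ when $|J\cap J'|>1$.

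I expect the main obstacle to be the non-formal input underlying the two expansions: identifying precisely which $W_J$ are coordinate sub-products of $(S^{d-1})^{n-1}$ (those with $n\in J$) and which are ``diagonals'' (those with $n\notin J$), and computing the class of the latter from $\Delta_*[S^{d-1}]=\sum_i[S^{d-1}]_i$. Once that is set up, the self-duality statement and the concluding bookkeeping are routine K\"unneth calculus. For the subcase $|J\cap J'|=1$ one can instead argue geometrically: then $J\cup J'=\unm$, so $W_J\cap W_{J'}=W_{\unm}$ is the single point $\rho\equiv -e_1$, and a short tangent-space computation --- using that $J\setminus\{k\}$, $J'\setminus\{k\}$ and $\{k\}$ partition $\unm$ where $\{k\}=J\cap J'$ --- shows the intersection there is transverse, giving $\pm 1$ at once.
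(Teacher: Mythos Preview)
Your argument is correct, but it follows a different route from the paper's. The paper proves both cases geometrically: when $|J\cap J'|=1$ it checks (as you also sketch at the end) that $W_J\cap W_{J'}$ is the single point $\rho\equiv -e_1$ with a transversal intersection; when $|J\cap J'|>1$ and $n\in J\cup J'$, it picks $q\in J\cap J'$ with $q\neq n$ and a rotation $\alpha$ of $\bbr^d$ with $\alpha(e_1)\neq e_1$, and uses the coordinate diffeomorphism $h$ of $Z$ that post-composes only the $q$-th entry with $\alpha$ to push $W_J$ off $W_{J'}$, concluding $[W_J]\cdot[W_{J'}]=0$ since $h$ is isotopic to the identity. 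Your approach is instead entirely homological: you transport everything to $(S^{d-1})^{n-1}$ via $h$, identify the $W_I$ with $n\in I$ as coordinate sub-products (establishing the self-duality that the paper records separately as~\eqref{intnumZ}), and expand the ``diagonal'' classes $[W_{J'}]$ with $n\notin J'$ via $\Delta_*[S^{d-1}]=\sum_i[S^{d-1}]_i$. The paper's proof is shorter and more self-contained; yours is more systematic and, as a by-product, gives the explicit expansion $[W_{J'}]=\sum_{i\in J'}\pm[W_{\{n\}\cup(J'\setminus\{i\})}]$, which is essentially the computation behind \lemref{phi}(b). One small point worth making explicit in your write-up: in the diagonal expansion over $\bbz$ the coefficients are each $\pm 1$, and your conclusion is safe because at most one summand pairs nontrivially with $[W_J]$, so no cancellation can occur.
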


\begin{proof}
Suppose that $J\cap J'=\{q\}$. Then $|J\cup J'|= |J|+|J'|-|J\cap J'|=n$.
Then, $n\in J\cup J'$ and $W_J\cap W_{J'}$ consists
of the single point $\rho_{\scr J\cup J'}$ (satisfying $\rho_{\scr J\cup J'}(i)=-e_1$
for all $i\in\unm$). It is not hard to check that the intersection
is transversal (see \cite[proof of~(34)]{FS}), so $[W_J]\cdot [W_{J'}]=\pm 1$.

In the case $|J\cap J'|>1$, there exists $q\in J\cap J'$ with
$q\neq n$. Let $\alpha$ be a rotation of $\bbr^d$ such that $\alpha(e_1)\neq e_1$.
Let $h\:Z\to Z$ be the diffeomorphism such that 
$h(\rho)(k)=\rho(k)$ if $k\neq q$ and $h(\rho)(q)=\alpha\pcirc\rho(q)$. 
We now use that $n\in J\cup J'$, say $n\in J'$. Then, $\rho(q)=-e_1$ for $\rho\in W_{J'}$.
Hence, $h(W_J)\cap W_{J'} =\emptyset$. As $h$ is isotopic to the identity of $Z$, this
implies that $[W_J]\cdot [W_{J'}]=0$.
\end{proof}

%

\begin{Remark} \rm In \lemref{intnum}, the hypothesis 
$n\in J\cup J'$ is not necessary if $d$ is even, by the above proof,
since there exists a diffeomorphism of $S^{d-1}$ isotopic
to the identity and without fixed point. But, for example, if $n=d=3$,
one checks that $[W_J]\cdot [W_{J'}]=\pm 2$ for $J=J'=\{1,2\}$.
\end{Remark}

In the case $n\in J\cap J'$ and $|J|+|J'|=n+1$, \lemref{intnum}
takes the following form:
\begin{equation}\label{intnumZ}
[W_J]\cdot [W_{J'}] =
\left\{\begin{array}{lll}
\pm 1 & \hbox{if } J\cap J'=\{n\} \\
0 & \hbox{otherwise} \, .
\end{array}\right.
\end{equation}

Therefore, the basis $\{[W_J]\mid |J|=n-k,n\in J\}$ of $H_{k(d-1)}(Z;\bbz)$ has a 
dual basis (up to sign)
$\{[W_J]^\sharp\in H_{(n-k)(d-1)}(Z;\bbz)\mid |J|=n-k,n\in J\}$ for
the intersection form, defined by $[W_J]^\sharp=[W_K]$, where 
$K=\bar J\cup\{n\}$.

We are now in position to express the homomorphism 
$\phi_*: H_*(Z';\bbz)\to H_*(Z;\bbz)$ induced by the inclusion $Z'\subset Z$.
By \lemref{zjbase}, one has a direct sum decomposition
$$
H_*(Z';\bbz) = A_* \oplus B_* \, ,
$$
where
\begin{itemize}
\item $A_*$ is the free abelian group generated by $[W_J]$
with $J\subset\unm$ long and $n\in J$.
\item $B_*$ is the free abelian group generated by $[W_J]$
with $J\subset\unm$ long and $n\notin J$.
\end{itemize}
\lemref{zjbase} also gives a direct sum decomposition 
$$
H_*(Z;\bbz) = A_* \oplus C_* \, ,
$$
where
\begin{itemize}
\item $A_*$ is the free abelian group generated by $[W_J]$
with $J\subset\unm$ with $n\in J$ and $J$ long.
\item $C_*$ is the free abelian group generated by $[W_J]$
with $J\subset\unm$ with $n\in J$ and $J$ short.
\end{itemize}

\begin{Lemma}\label{phi}
\renewcommand{\labelenumi}{(\alph{enumi})}
\begin{enumerate}
\item $\phi_*$ restricted to $A_*$ coincides with the identity of $A_*$.
\item Suppose that $\ell$ is dominated. Then $\phi_*(B_*)\subset A_*$.
\end{enumerate}
\end{Lemma}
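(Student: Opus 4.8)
The plan is to dispose of (a) by naturality and to prove (b) by expressing $\phi_*([W_J])$ in the basis of $H_*(Z;\bbz)$ from \lemref{zjbase}(b), identifying each coefficient with an intersection number via \lemref{intnum}, and then invoking the dominated hypothesis to see that only ``long'' basis elements can occur.

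Part (a) is immediate: the generators of $A_*$ on both sides are the classes $[W_J]$ with $J$ long and $n\in J$, each represented by the same submanifold $W_J\subset Z'\subset Z$ with the same orientation, so naturality of the fundamental class under the inclusion $\phi$ gives $\phi_*([W_J])=[W_J]$.

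For (b) it suffices, by linearity, to show $\phi_*([W_J])\in A_*$ when $J$ is long with $n\notin J$. So fix such a $J$; then $[W_J]\in H_{(d-1)(n-|J|)}(Z';\bbz)$, and by \lemref{zjbase}(b) we may write $\phi_*([W_J])=\sum_K c_K[W_K]$, the sum over $K\subset\unm$ with $n\in K$ and $|K|=|J|$. Using the dual basis $[W_K]^\sharp=[W_{\bar K\cup\{n\}}]$ for the intersection form on $H_*(Z;\bbz)$ described above, one has $c_K=\pm\,\phi_*([W_J])\cdot[W_{\bar K\cup\{n\}}]$ (intersection in $Z$), and since $W_J\subset Z'$ this equals $\pm\,[W_J]\cdot[W_{\bar K\cup\{n\}}]$ in $Z$. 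Now $|J|+|\bar K\cup\{n\}|=n+1$ and $n\in\bar K\cup\{n\}$, so \lemref{intnum} applies and gives $c_K=0$ unless $|J\cap(\bar K\cup\{n\})|=1$. As $n\notin J$, that intersection is $J\setminus K$, so the surviving $K$ satisfy $|J\setminus K|=1$; since $|J|=|K|$ and $n\in K\setminus J$, this forces $K=(J\setminus\{j\})\cup\{n\}$ for some $j\in J$. It remains to check such a $K$ is long. From
$$
\sum_{i\in K}\ell_i-\sum_{i\notin K}\ell_i=\Big(\sum_{i\in J}\ell_i-\sum_{i\notin J}\ell_i\Big)-2(\ell_j-\ell_n)
$$
the bracket is positive because $J$ is long, and $\ell_j-\ell_n\le 0$ because $\ell$ is dominated and $j<n$ (as $j\in J$, $n\notin J$); hence $K$ is long, i.e.\ $[W_K]\in A_*$, and therefore $\phi_*([W_J])\in A_*$.

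The only delicate point is the intersection-number bookkeeping: one must track the hypotheses of \lemref{intnum} (note that $\bar K\cup\{n\}$ always contains $n$, so its second clause is available) and observe that $|J\cap(\bar K\cup\{n\})|=0$ is impossible for cardinality reasons. The dominated hypothesis is used only in the final inequality, and it is exactly what is needed: without it the set $(J\setminus\{j\})\cup\{n\}$ could be short, and then $\phi_*([W_J])$ would have a nonzero component in $C_*$.
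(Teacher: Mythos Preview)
Your proof is correct and follows essentially the same route as the paper: both arguments read off the coefficients of $\phi_*([W_J])$ in the basis $\{[W_K]:n\in K\}$ via the intersection pairing and the dual basis $[W_K]^\sharp=[W_{\bar K\cup\{n\}}]$, apply \lemref{intnum} to force $K=(J\setminus\{j\})\cup\{n\}$, and then use the dominated hypothesis to conclude that such $K$ is long. The only cosmetic difference is that the paper phrases (b) as a contradiction (assume some $[W_K]\in C_*$ has nonzero coefficient) whereas you argue directly; your added remark that $|J\cap(\bar K\cup\{n\})|=0$ is impossible fills a small gap the paper leaves implicit.
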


\begin{proof}
Point (a) is obvious. For (b),
let $[W_J]\in B_{(n-|J|)(d-1)}$.
By what has been seen, it suffices to show that 
$[W_J]\cdot [W_K]^\sharp=0$ for all $[W_K]\in  C_{*}$
with $|K|=|J|$.
Suppose that there exists $[W_K]\in C_{*}$
with $|K|=|J|$ 
such that $[W_J]\cdot [W_K]^\sharp=\pm 1$. One has
$[W_K]^\sharp=[W_L]$ where $L=\bar K\cup\{n\}$. By \lemref{intnum},
this means that $J\,\cap\, (\bar K\cup\{n\})=J-K=\{i\}$, with $i<n$.
As $|K|=|J|$,
this is equivalent to $K= (J-\{i\})\cup \{n\}$. As $\ell_n\geq\ell_j$,
this contradicts the fact that $J$ is long and $K$ is short.
\end{proof}

\section{The Betti numbers of the chain space}\label{S.cohom}

Let $\ell=(\ell_1,\dots,\ell_n)$ be a dominated length-vector.
Let $a_k=a_k(\ell)$ be the number of short subsets $J$ containing $n$
with $|J|=k+1$. Alternatively, $a_k$ is the number of sets $J\in\shod$
with $|J|=k$.

\begin{Theorem}\label{T.cohcha}
Let $\ell=(\ell_1,\dots,\ell_n)$ be a dominated length-vector.
Then, if $d\geq 3$, $H^*(\cha{n}{d}(\ell);\bbz)$ is free abelian with rank
$$
{\rm rank\,} H^k(\cha{n}{d}(\ell);\bbz) =
\left\{
\begin{array}{lll}
a_{s} & \hbox{if } \kern 2mm k=s(d-1), \quad s=0,1, \dots, n-3, \\
a_{n-s-2} & \hbox{if } \kern 2mm  k = s(d-1) -1, \quad s=0, \dots, n-2,\\
0 & \hbox{otherwise.}
\end{array}\right.
$$
\end{Theorem}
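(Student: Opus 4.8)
The plan is to compute the cohomology of $\chav = f^{\mun}(0)$ by relating it, via Morse theory, to the homology of $Z$ and $Z'$, for which we already have explicit bases from \lemref{zjbase}, together with the structure of the inclusion map $\phi_* : H_*(Z';\bbz)\to H_*(Z;\bbz)$ from \lemref{phi}. First I would recall that $Z \approx (S^{d-1})^{n-1}$ is a closed manifold of dimension $(n-1)(d-1)$, that $\chav \subset Z$ is a closed submanifold of codimension $d-1$, and that $f \: (Z,\chav)\to((-\infty,0],0)$ with $f^{\mun}(0)=\chav$; removing a small open tubular neighbourhood $N$ of $\chav$, the pair $(Z - N, \partial N)$ is a compact manifold with boundary on which $f$ restricts to a proper Morse function with critical points exactly the $\rho_{\scr J}$, $J\in\call$, of index $(d-1)(n-|J|)$ by \lemref{fmorse}. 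Since $\partial N$ is an $(S^{d-2})$-bundle over $\chav$, I can organise the computation through the long exact sequence of the pair $(Z, Z')$ (equivalently $(Z, Z-N)$) combined with the Thom isomorphism $H_k(Z,Z';\bbz)\iso H_{k-(d-1)}(\chav;\bbz)$ coming from the normal bundle of $\chav$ in $Z$ (orientability issues are handled $\bbz_2$- or by noting $d-1$-dimensional normal bundle; but I would argue the relevant bundle is orientable, or else pass to a local-coefficient version that still gives the stated ranks).

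The core computation: feed the decompositions $H_*(Z';\bbz)=A_*\oplus B_*$ and $H_*(Z;\bbz)=A_*\oplus C_*$ into the long exact sequence of $(Z,Z')$. By \lemref{phi}(a), $\phi_*$ is the identity on $A_*$, and by \lemref{phi}(b) (using that $\ell$ is dominated) $\phi_*(B_*)\subset A_*$; moreover the composite $B_*\to A_*$ is easily seen to be zero because an element $[W_J]\in B_*$ with $n\notin J$ has image detected against the dual basis $[W_K]^\sharp=[W_L]$, $L=\bar K\cup\{n\}$, and the argument in the proof of \lemref{phi}(b) shows all these intersection numbers vanish — so in fact $\phi_* = \mathrm{id}_{A_*}\oplus 0$ on $A_*\oplus B_*$. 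Hence $\phi_*$ is surjective onto $A_*$ with kernel $B_*$, its cokernel is $C_*$, and the long exact sequence breaks into short exact pieces $0\to C_{k}\to H_k(Z,Z';\bbz)\to B_{k-1}\to 0$, i.e. $H_k(Z,Z';\bbz)\iso C_k\oplus B_{k-1}$ (all groups free, so the sequence splits). Now apply the Thom isomorphism: $H_k(\chav;\bbz)\iso H_{k+(d-1)}(Z,Z';\bbz)\iso C_{k+(d-1)}\oplus B_{k+(d-2)}$. Finally, count ranks. The rank of $C_{j}$ is the number of short $J$ with $n\in J$ and $(d-1)(n-|J|)=j$, i.e. $j=s(d-1)$ with $s=n-|J|$, so $|J|=n-s$; the number of short $J$ containing $n$ of size $n-s$ is, by the definition of $a_k$ (the number of short $J\ni n$ with $|J|=k+1$, equivalently $|J\cap\unmi|=k$), equal to $a_{n-s-1}$ — but I need to be careful and instead use: $\mathrm{rank}\,C_{s(d-1)} = a_{n-s-1}$ does not match; the correct bookkeeping is that a short $J\ni n$ with $n-|J|=s$ has $|J\setminus\{n\}|=n-s-1$, contributing to $a_{n-s-1}$, and dualizing ($K=\bar J\cup\{n\}$) rephrases this as counting sets in $\shod$ of complementary size. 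Working this through cleanly, $\mathrm{rank}\,H^{s(d-1)}(\chav)=a_s$ comes from the $C$-summand and $\mathrm{rank}\,H^{s(d-1)-1}(\chav)=a_{n-s-2}$ comes from the $B_{s(d-1)-1+\,\cdot}$ shift (a long $J\not\ni n$ of size $|J|$ corresponds under complementation to a short set, producing the index $n-s-2$).

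The main obstacle I expect is not the exact-sequence algebra but getting the index bookkeeping exactly right — matching the bidegrees $s(d-1)$ and $s(d-1)-1$, the ranges $s=0,\dots,n-3$ and $s=0,\dots,n-2$, and the subscripts $a_s$ versus $a_{n-s-2}$ — by translating between "long subset $J$ with/without $n$" and "short subset" via complementation and via the identity $|J|+|\bar J|=n$, while remembering the Thom-isomorphism shift by $d-1$ and the Poincaré duality $H^k(\chav)\iso H_{\dim\chav - k}(\chav)$ with $\dim\chav=(n-2)(d-1)-1$. A secondary technical point is justifying freeness over $\bbz$ (hence that (b)$\Rightarrow$(c) in Theorem~A): this follows because $H_*(Z')$ and $H_*(Z)$ are free by \lemref{zjbase}, $\phi_*$ has free image and cokernel, so every group in the long exact sequence is free and all extensions split; I would also need to check orientability of the normal bundle of $\chav$ in $Z$ to run the integral Thom isomorphism (for $d$ odd this needs a small argument; alternatively one runs the whole computation with $\bbz_2$-coefficients to get the $\bbz_2$-Betti numbers, then upgrades to $\bbz$ using that $Z, Z'$ have free integral homology and the universal-coefficient spectral sequence degenerates). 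I would close by remarking that Remark~\ref{Rcohchawrong} shows the dominated hypothesis is genuinely used — precisely in \lemref{phi}(b), which is the only place it enters.
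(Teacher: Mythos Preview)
Your overall strategy coincides with the paper's: compute $H_*(Z,Z';\bbz)$ from the long exact sequence of $(Z,Z')$ using \lemref{phi}, then pass to $H^*(\chav;\bbz)$ by a duality. Two concrete points deserve correction.

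First, the codimension of $\chav$ in $Z$ is $d$, not $d-1$: one has $\dim Z - \dim\chav = (n-1)(d-1) - \big[(n-2)(d-1)-1\big] = d$. (Equivalently, $\chav = F^{-1}(0)$ for $F\:Z\to\bbr^d$, so the normal bundle is trivial of rank $d$.) Thus the Thom shift you wrote, $H_k(Z,Z')\iso H_{k-(d-1)}(\chav)$, is off by one; with the correct shift $H_k(Z,Z')\iso H_{k-d}(\chav)$ the bookkeeping does come out. The paper sidesteps this entirely by using Poincar\'e--Lefschetz duality on a closed tubular neighbourhood $N$ of $\chav$ (which is orientable because $Z$ is), obtaining directly $H^k(\chav)\approx H_{(n-1)(d-1)-k}(N,\partial N)\approx H_{(n-1)(d-1)-k}(Z,Z')$. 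This is cleaner: no normal-bundle orientability to check, and no final Poincar\'e duality on $\chav$ to convert homology to cohomology.

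Second, your assertion that $\phi_*|_{B_*}=0$ is not justified by the proof of \lemref{phi}(b): that argument only shows the pairing of $\phi_*[W_J]$ against the $C_*$-dual basis vanishes, i.e.\ that the $C_*$-component of $\phi_*(B_*)$ is zero, which is exactly the statement $\phi_*(B_*)\subset A_*$. The $A_*$-component need not vanish, and indeed the paper remarks that $\ker\phi_*$ is isomorphic to $B_*$ but ``not equal, in general''. Fortunately your proof does not actually need $\phi_*|_{B_*}=0$: since $\phi_*$ has matrix form $\left(\begin{smallmatrix} I & M\\ 0 & 0\end{smallmatrix}\right)$ with respect to $A_*\oplus B_*\to A_*\oplus C_*$, one gets $\ker\phi_*\approx B_*$ and ${\rm coker}\,\phi_*\approx C_*$ regardless of $M$, and the rank computation goes through. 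Just drop the overclaim.
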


\begin{proof}
Let $N$ be a closed tubular neighbourhood of $\chav=\cha{n}{d}(\ell)$ in
$Z=Z^n_d$. Let $Z'=Z-\chav$.
By Poincar\'e-Lefschetz duality and excision, one has the isomorphisms
on integral homology
$$
H^k(\chav)\approx H^k(N) \approx H_{(n-1)(d-1)-k}(N,\partial N)
\approx  H_{(n-1)(d-1)-k}(Z,Z')
$$
and
\begin{eqnarray*}
H^k(Z,\chav) &\approx & H^k(Z,N) \approx H^k(Z- {\rm int\,} N,\partial N)
\\ 
&\approx & H_{(n-1)(d-1)-k}(Z- {\rm int\,} N)\approx
H_{(n-1)(d-1)-k}(Z') \, .
\end{eqnarray*}

The homology of $Z$ and $Z'$ are concentrated in degrees which are multiples of 
$(d-1)$. Hence, $H^k(\chav)=0$ if $k\not\equiv 0,-1 \ {\rm mod\,} (d-1)$.
The possibly non-vanishing $H^k(\chav)$ sit in a diagram
$$
\begin{array}{c}{\xymatrix@C-10pt@M-1pt@R-4pt{
0 \ar[r] &
H^{s(d-1)-1}(\chav) \ar[d]^(0.50){\approx}
\ar[r]^(0.50){}  &
H^{s(d-1)}(Z,\chav) \ar[d]^(0.50){\approx}
\ar[r]^(0.50){}  &
H^{s(d-1)}(Z) \ar[d]^(0.50){\approx}
\ar[r]^(0.50){}  &
H^{s(d-1)}(\chav) \ar[d]^(0.50){\approx} 
 \ar[r] & 0 \\
0 \ar[r] &
H_{r(d-1)+1}(Z,Z') \ar[r]^(0.50){}  &
H_{r(d-1)}(Z') \ar[r]^(0.50){\phi_{r(d-1)}}  &
H_{r(d-1)}(Z) \ar[r]^(0.50){}  &
H_{r(d-1)}(Z,Z') \ar[r] & 0
}}\end{array}
$$
with $r=n-1-s$.
The horizontal sequences are exact.
The (co)homology is with integral coefficients and the diagram
commutes up to sign \cite[Theorem~I.2.2]{Br}.

We deduce that $H_{r(d-1)}(Z,Z')\approx {\rm coker\,} \phi_{r(d-1)}$
which is isomorphic to $C_{r(d-1)}$ by \lemref{phi}. Therefore,
$H^{s(d-1)}(\cha{n}{d}(\ell))$ is free abelian with rank
$$
{\rm rank\,} H^{s(d-1)}(\cha{n}{d}(\ell))=
{\rm rank\,} C_{(n-1-s)(d-1)} = a_{s} \, .
$$
On the other hand, $H_{r(d-1)+1}(Z,Z')\approx \ker \phi_{r(d-1)}$
which, by \lemref{phi} is isomorphic (though not equal, in general) to 
$B_{r(d-1)}$. Therefore,
$H^{s(d-1)-1}(\cha{n}{d}(\ell))$ is free abelian with rank
$$
{\rm rank\,} H^{s(d-1)-1}(\cha{n}{d}(\ell))=
{\rm rank\,} B_{(n-1-s)(d-1)} = a_{n-s-2} \, .
$$
\end{proof}

\begin{Remark}\label{Rcohchawrong}\rm
\thref{T.cohcha} is wrong if $\ell$ is not dominated. For example, let 
$\ell=(1,1,1,\varepsilon)$ with $\varepsilon<1$. Then,
$\cha{4}{d}(\ell)$ is diffeomorphic to the unit tangent bundle $T^1S^{d-1}$
of $S^{d-1}$: a map $g\:\cha{4}{d}(\ell)\to T^1S^{d-1}$ is given by
$g(\rho)=(\rho(1),\hat\rho(2))$, where the latter is obtained from 
$(\rho(1),\rho(2))$ by the Gram-Schmidt orthonormalization process.
The map $g$ is clearly a diffeomorphism for $\varepsilon=0$ and 
the robot arm $F_{(1,1,1)}\:\bbs^3_d\to\bbr^d$ of \secref{S.robotarm} 
has no critical value in the disk $\{|x|<1\}\subset\bbr^d$. 
In particular, $\cha{4}{3}(\ell)$ is diffeomorphic to $SO(3)$,
and thus $H^2(\cha{4}{3}(\ell);\bbz)=\bbz_2$. What goes wrong is 
Point (b) of \lemref{phi}: for instance $A_2=0$, 
$B_2=H_2(Z,Z')\approx H^2(Z) = C_2\approx \bbz^3$ and, by the proof of 
\thref{T.cohcha}, $\phi\: H^2(Z')\to H^2(Z)$ must be injective with cokernel $\bbz_2$.
To obtain this fine result with our technique would require to control the
signs in \lemref{intnum}. 
\end{Remark}

\section{The manifold $V_d(\ell)$}\label{S.cVd}

Let $\ell\in\bbr_{>0}^n$ be a length-vector. In \cite{Ha,Ha2},
a manifold $V_d(\ell)$ is introduced, whose boundary is $\chav=\cha{n}{d}(\ell)$,
and Morse Theory on $V_d(\ell)$ provides some information on $\chav$.
In this section, we further study the manifold $V_d(\ell)$ in order to compute 
the ring $H^{(d-1)*}(\chav)$ when $d\geq 3$.

Presented as a submanifold of $Z=Z^n_d$, the manifold $V_d(\ell)$ is
$$
V_d(\ell)=\{\rho\in Z\mid
\sum_{i=1}^{n-1}\ell_i\rho(i)=t e_1 \hbox{ with } t\geq \ell_n\} \, .
$$
Observe that $V_d(\ell)$ is $O(d-1)$-invariant.
Let $g:V_d(\ell)\to\bbr$ defined by $g(z)=-|\sum_{i=1}^{n-1}\ell_iz_i|$.
The following proposition is proven in \cite[Th.~3.2]{Ha}.

\begin{Proposition}\label{Morse}
Suppose that the length-vector $\ell\in\bbr_{>0}^n$ is generic. Then
\renewcommand{\labelenumi}{(\roman{enumi})}
\begin{enumerate}
\item  $V_d(\ell)$ is a smooth $O(d-1)$-submanifold
of $Z$, of dimension $(n-2)(d-1)$, with boundary
$\chav$.
\item $g\:V_d(\ell)\to\bbr$
is an $O(d-1)$-equivariant Morse function, with critical points
$\{\rho_{\scr J} \mid J \hbox{ short and } n\in J\}$ 
(see \eqref{rhoj} for the definition of $\rho_{\scr J}$).
The index of $\rho_{\scr J}$ is equal to $(d-1)(|J|-1)$. 
\mancqfd
\end{enumerate}
\end{Proposition}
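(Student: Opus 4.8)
The plan is to present $V_d(\ell)$ as a transverse preimage of a half-line under the truncated robot arm, and then to compute the Hessian of $g$ at the collinear configurations in explicit charts. For (i), put $K=\unmi$ and let $G\:Z\to\bbr^d$ be the smooth $O(d-1)$-equivariant map $G(\rho)=\sum_{i\in K}\ell_i\rho(i)$ (under the restriction diffeomorphism $Z\approx(S^{d-1})^{n-1}$ this is the robot arm of the truncated length-vector $(\ell_1,\dots,\ell_{n-1})$, and $G=F+\ell_ne_1$). Let $R=\{te_1\mid t\ge\ell_n\}$, a closed $1$-submanifold of $\bbr^d$ with $\partial R=\{\ell_ne_1\}$, so that $V_d(\ell)=G^{-1}(R)$. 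I would check that $G$ is transverse to $R$ as a submanifold-with-boundary, i.e. to both $R\setminus\partial R$ and $\partial R$: the differential $dG_\rho$ fails to be onto only when $\rho|_K$ is collinear along some unit vector $u$, in which case $\mathrm{im}\,dG_\rho=u^{\perp}$ and $G(\rho)$ is a scalar multiple of $u$; if such a $\rho$ lies in $G^{-1}(R)$ then $G(\rho)\in R$ forces $u=\pm e_1$, hence $\mathrm{im}\,dG_\rho+\bbr e_1=e_1^{\perp}+\bbr e_1=\bbr^d$, and if moreover $G(\rho)=\ell_ne_1$ one would obtain a subset $A\subset\unm$ with $n\in A$ and $\sum_{i\in A}\ell_i=\sum_{i\notin A}\ell_i$, contradicting genericity — so $dG_\rho$ is in fact onto at every point of $\chav=G^{-1}(\ell_ne_1)$. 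The preimage theorem then shows that $V_d(\ell)$ is a compact smooth submanifold of $Z$ with $\partial V_d(\ell)=\chav$ and codimension $d-1$, hence of dimension $(n-2)(d-1)$; it is $O(d-1)$-invariant because $G$ is equivariant and $O(d-1)\subset O(d)$ fixes $e_1$, and hence $R$, pointwise.

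For (ii), note first that on $V_d(\ell)$ one has $G(\rho)=t(\rho)e_1$ with $t(\rho)=\langle G(\rho),e_1\rangle\ge\ell_n>0$, so $g=-|G(\rho)|=-t(\rho)$ is the restriction to $V_d(\ell)$ of the smooth function $-\langle G(\cdot),e_1\rangle$ on $Z$, and it is visibly $O(d-1)$-invariant. Since $T_\rho V_d(\ell)=(dG_\rho)^{-1}(\bbr e_1)$, one has $dg_\rho(v)=-s$ whenever $dG_\rho(v)=se_1$, so $\rho\in\crit(g)$ exactly when $e_1\notin\mathrm{im}\,dG_\rho$, i.e. when $\rho|_K$ is collinear along $\pm e_1$. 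Such a $\rho$ is of the form $\rho_{\scr J}$ with $n\in J$ (as $\rho(n)=-e_1$), and since $\rho\in V_d(\ell)$ we must have $t(\rho_{\scr J})=\sum_{i\in K}\ell_i\kappa_J(i)\ge\ell_n$; as $t(\rho_{\scr J})=\ell_n+\bigl(\sum_{i\notin J}\ell_i-\sum_{i\in J}\ell_i\bigr)$ and genericity forbids the value $\ell_n$, this forces $J$ to be short and the critical point to lie in the interior, so $g$ has no critical points on $\chav$. Each $\rho_{\scr J}$ is an $O(d-1)$-fixed point, so it remains only to establish non-degeneracy and the index.

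This I would do in the chart $\rho(i)=(\kappa_J(i)\sqrt{1-|x_i|^2},\,x_i)$, $x_i\in\bbr^{d-1}$ ($i\in K$), centred at $\rho_{\scr J}$: here $G(\rho)=\bigl(\sum_{i\in K}\ell_i\kappa_J(i)\sqrt{1-|x_i|^2},\ \sum_{i\in K}\ell_ix_i\bigr)$, so near $\rho_{\scr J}$ the submanifold $V_d(\ell)$ is the \emph{linear} subspace $\{\sum_{i\in K}\ell_ix_i=0\}$, on which $g=-\sum_{i\in K}\ell_i\kappa_J(i)\sqrt{1-|x_i|^2}$; thus the Hessian of $g$ at $\rho_{\scr J}$ is, up to a positive scalar, the restriction of the quadratic form $\sum_{i\in K}\ell_i\kappa_J(i)|x_i|^2$ to that subspace. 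Splitting off the $d-1$ coordinate directions of each $\bbr^{d-1}$ factor exhibits it as $d-1$ isometric copies of $\bar q(y)=\sum_{i\in K}\ell_i\kappa_J(i)y_i^2$ restricted to the hyperplane $H'=\{\sum_{i\in K}\ell_iy_i=0\}\subset\bbr^{n-1}$. The form $\bar q$ on $\bbr^{n-1}$ is non-degenerate of index $|K\cap J|=|J|-1$, and the $\bar q$-orthogonal complement of $H'$ is spanned by $w=(\kappa_J(i))_{i\in K}$ with $\bar q(w,w)=\sum_{i\in K}\ell_i\kappa_J(i)=t(\rho_{\scr J})>0$; hence $H'$ splits off a positive $\bar q$-orthogonal line, so $\bar q|_{H'}$ is non-degenerate of index $|J|-1$, and therefore $g$ is non-degenerate at $\rho_{\scr J}$ with index $(d-1)(|J|-1)$.

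The chart and quadratic-form computations of the last paragraph form the bulk of the work, but they are routine; the one place where genericity is truly essential — and which I expect to be the delicate step — is the transversality of $G$ along $\chav$ together with the strict inequality $t(\rho_{\scr J})>\ell_n$, for these are exactly what guarantee that $\chav=\partial V_d(\ell)$ carries no critical points of $g$ and that every collinear critical configuration lies in the interior, so that $(V_d(\ell),g)$ behaves as a well-posed Morse-theoretic pair.
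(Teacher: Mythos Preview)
Your proof is correct. The paper itself does not prove this proposition at all: it simply cites \cite[Th.~3.2]{Ha} and moves on, so there is nothing to compare your argument against within the present paper. What you have written is precisely the kind of argument alluded to elsewhere in the paper (cf.\ the remark in the proof of \lemref{fmorse} that the non-degeneracy and index computation ``are classical in topological robotics using arguments as in \cite[proof of Theorem~3.2]{Ha}''): realise $V_d(\ell)$ as a transverse preimage of the half-axis under the truncated robot arm, read off the critical points as the collinear configurations, and compute the Hessian in the obvious graph charts. Your handling of the two uses of genericity --- transversality of $G$ at $\partial R=\{\ell_ne_1\}$, and the strict inequality $t(\rho_{\scr J})>\ell_n$ forcing critical points into the interior --- is exactly right, as is the trick of identifying the $\bar q$-orthogonal complement of the constraint hyperplane $H'$ with the line spanned by $w=(\kappa_J(i))_{i\in K}$ and checking $\bar q(w,w)=t(\rho_{\scr J})>0$ to conclude that restriction to $H'$ preserves the negative index $|J|-1$.
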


\begin{Corollary}\label{Morse-Cor}
The cohomology group $H^*(V_d(\ell);\bbz)$ is free abelian and
$$
{\rm rank\,} H^k(V_d(\ell);\bbz) =
\left\{
\begin{array}{lll}
a_{s} & \hbox{if } \kern 2mm k=s(d-1) \\
0 & \hbox{otherwise.}
\end{array}\right.
$$
\end{Corollary}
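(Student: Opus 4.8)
The plan is to compute $H^*(V_d(\ell);\bbz)$ directly from the Morse function $g$ provided by \proref{Morse}, exploiting the fact that all of its critical indices are multiples of $d-1$.

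First I would invoke \proref{Morse}: $g\:V_d(\ell)\to\bbr$ is a Morse function whose critical points are the $\rho_{\scr J}$ with $J$ short and $n\in J$, the index of $\rho_{\scr J}$ being $(d-1)(|J|-1)$. Collecting the critical points by index, the number of critical points of index $k$ is the number of short subsets $J\ni n$ with $(d-1)(|J|-1)=k$; this equals $a_s$ when $k=s(d-1)$, by the very definition of $a_s$, and it is $0$ whenever $k$ is not a multiple of $d-1$.

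Next I would feed this into Morse theory to conclude that $V_d(\ell)$ has the homotopy type of a CW complex with $a_s$ cells in dimension $s(d-1)$ for each $s$, and no other cells. The one point that needs care is that $V_d(\ell)$ is a manifold with boundary $\chav$, on which $g$ attains its maximum value $-\ell_n$; but this causes no trouble, since a short computation gives $g(\rho_{\scr J})=-\big(\ell_n+\sum_{i\notin J}\ell_i-\sum_{i\in J}\ell_i\big)$, which is \emph{strictly} less than $-\ell_n$ because $J$ is short and $\ell$ is generic. Hence for small $\varepsilon>0$ the sublevel set $g^{-1}\big((-\infty,-\ell_n-\varepsilon]\big)$ already contains every critical point and is a deformation retract of $V_d(\ell)$ (push in the boundary collar), so the usual Morse-theoretic CW structure applies to it.

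Finally, since $d\geq 3$ these cells sit only in dimensions that are multiples of $d-1\geq 2$, so no two of them are consecutive integers; therefore every differential of the cellular chain complex vanishes, $H_*(V_d(\ell);\bbz)$ is free abelian of rank $a_s$ in degree $s(d-1)$ and $0$ otherwise, and the same description of $H^*(V_d(\ell);\bbz)$ follows from the universal coefficient theorem. The only genuine---and rather mild---obstacle here is the boundary bookkeeping of the third paragraph; the remainder is just the Morse lacunary principle applied to \proref{Morse}.
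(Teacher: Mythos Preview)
For $d\geq 3$ your argument is correct and is precisely what the paper does: the lacunary principle applied to \proref{Morse}. (Your care with the boundary collar is well taken but not strictly necessary; the paper simply calls the $d\geq 3$ case ``obvious''.)

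The gap is that the corollary is stated---and later used in \lemref{KinV}---for all $d\geq 2$, whereas your last paragraph assumes $d\geq 3$. When $d=2$ the critical indices are $0,1,2,\dots$, i.e.\ consecutive integers, so the lacunary principle is unavailable and nothing you have written prevents the cellular differentials from being nonzero. The paper handles $d=2$ separately by invoking \cite[Theorem~4]{FS}: the Morse function $g$ is invariant under the axial involution $\tau$ of \secref{S.robotarm}, and its critical set is exactly the fixed-point set $V_2(\ell)^\tau$; that theorem then forces $g$ to be $\bbz$-perfect. Without an ingredient of this kind, the $d=2$ case remains unproved in your argument.
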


\begin{proof}
The number of critical point of $g$ is equal to $a_s$. \corref{Morse-Cor}
is then obvious if $d\geq 3$. When $d=2$, one uses \cite[Theorem~4]{FS},
the Morse function $g$ being $\tau$-invariant and its
critical set being the the fixed point set $V_d(\ell)^\tau$.
\end{proof}

For each $J\subset\unmi$, define the submanifold
$\calr_d(J)$ of $Z^n_d=Z$ by
$$
\calr_d(J)=\{\rho\in Z\mid
\rho(i)=e_1 \hbox{ if } i\notin J \} \approx (S^{d-1})^{J} \, .
$$
Consider the space
$$
\calr_d(\ell) = \bigcup_{J\in\shod} \calr_d(J) \subset Z \, .
$$
As $\shod$ is a simplicial complex, the family $\{[\calr_d(J)]\mid J\in\shod\}$
is a free basis for $H_*(\calr_d(\ell);\bbz)$ (homology classes of $\calr_d(J)$
in lower degrees coincide in $H_*(\calr_d(\ell))$ with $[\calr_d(J')]$ for $J'\subset J$).
Thus, $H_*(\calr_d(\ell))$ is free abelian and
\begin{equation}\label{rkHrd}
{\rm rank\,} H_k(\calr_d(\ell);\bbz) =
\left\{
\begin{array}{lll}
a_{s} & \hbox{if } \kern 2mm k=s(d-1) \\
0 & \hbox{otherwise.}
\end{array}\right.
\end{equation}

\begin{Lemma}\label{KinV}
For $d\geq 2$,  there exists a map
$\mu\:\calr_d(\ell)\to V_d(\ell)$ such that
$H^*\mu\:H^*(V_d(\ell);\bbz)\to H^*(\calr_d(\ell);\bbz)$ is a ring isomorphism.
\end{Lemma}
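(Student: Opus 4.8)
The plan is to define the map $\mu$ explicitly on each piece $\calr_d(J)$ and to verify that it lands in $V_d(\ell)$, then to identify the induced map on cohomology by using the Morse-theoretic bases on both sides. First I would observe that for $J\in\shod$, i.e. $J\cup\{n\}\in\calss$, the subset $J$ is $\ell$-short, so $\sum_{i\in J}\ell_i<\sum_{i\notin J}\ell_i=\ell_n+\sum_{i\notin J,i<n}\ell_i$. For $\rho\in\calr_d(J)$ one has $\rho(i)=e_1$ for all $i\in\unmi\setminus J$, hence $\sum_{i=1}^{n-1}\ell_i\rho(i)=\big(\sum_{i\notin J,\,i<n}\ell_i\big)e_1+\sum_{i\in J}\ell_i\rho(i)$. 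The norm of the first (constant) term is $\sum_{i\notin J,i<n}\ell_i$, which by shortness of $J$ is $>\sum_{i\in J}\ell_i-\ell_n$; I would check that this forces $\big|\sum_{i=1}^{n-1}\ell_i\rho(i)\big|\geq\ell_n$ — actually the cleanest route is to note $\big|\sum_{i=1}^{n-1}\ell_i\rho(i)\big|\geq\sum_{i\notin J,i<n}\ell_i-\sum_{i\in J}\ell_i > -\ell_n$, which is vacuous, so instead I want to push $\rho$ to a configuration where the image lies strictly outside the ball of radius $\ell_n$. The correct device: compose with a deformation that moves each $\rho(i)$, $i\in J$, towards $e_1$; along a short segment this never leaves $V_d(\ell)$ provided the resulting sum has norm $\geq\ell_n$, and at the endpoint the sum equals $\big(\sum_{i<n}\ell_i\big)e_1$, whose norm is $\sum_{i<n}\ell_i\geq\ell_n$ since $\ell_n\leq\sum_{i<n}\ell_i$ (true for $n\geq 3$ dominated, or directly because $\{n\}$ being short means $\ell_n<\sum_{i<n}\ell_i$). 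So $\mu$ is most naturally built from $\calr_d(J)$ by first including into $Z$ via a family that keeps $t\geq\ell_n$; I would give $\mu$ as this inclusion-with-pushing and check compatibility on overlaps $\calr_d(J)\cap\calr_d(J')=\calr_d(J\cap J')$, so that the $\mu$ on the pieces glue to a well-defined map on $\calr_d(\ell)=\bigcup_{J\in\shod}\calr_d(J)$.

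Next I would compute $H^*\mu$. By \corref{Morse-Cor}, $H^*(V_d(\ell);\bbz)$ is free abelian, concentrated in degrees $s(d-1)$, with rank $a_s$; by \eqref{rkHrd}, $H_*(\calr_d(\ell);\bbz)$ has exactly the same ranks, hence so does $H^*(\calr_d(\ell);\bbz)$ (everything is free). So it suffices to show $H^*\mu$ is an isomorphism in each degree $s(d-1)$, and since both sides are free of the same finite rank, it is enough to show $H^*\mu$ is surjective (equivalently injective). For surjectivity I would exhibit, for each $J\in\shod$ with $|J|=s$, a cohomology class in $H^{s(d-1)}(V_d(\ell))$ whose $\mu$-pullback is dual to $[\calr_d(J)]$. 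The natural candidates come from Morse theory: by \proref{Morse}, the descending manifold of the critical point $\rho_{\scr J'}$ (for $J'$ short, $n\in J'$, $|J'|=s+1$) has dimension $(d-1)s$, and the ascending manifolds give Poincaré–Lefschetz dual classes in $H^{(d-1)s}(V_d(\ell))$; the index set $\{J'\text{ short},\,n\in J'\}$ is exactly $\{J\cup\{n\}\mid J\in\shod\}$, matching the generators of $H_*(\calr_d(\ell))$. I would then argue that $\mu$ carries $[\calr_d(J)]$ to (the dual of) the correct Morse generator — concretely, $\calr_d(J)$ is set up so that its image meets the descending manifold of $\rho_{\scr J\cup\{n\}}$ transversally in one point and misses the others, or equivalently $\mu_*[\calr_d(J)]$ is Poincaré–Lefschetz dual to the Morse cocycle indexed by $J\cup\{n\}$. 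Counting intersections of $\calr_d(J)$ with the ascending/descending manifolds is the computational core here; it parallels the intersection computations of \lemref{intnum} and \cite[\S\,4--5]{FS}.

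The main obstacle I anticipate is precisely the geometric bookkeeping of step one together with the transversality/intersection count in step two: one must choose the homotopy defining $\mu$ carefully enough that (i) it genuinely stays inside $V_d(\ell)$ for every $\rho\in\calr_d(J)$ and every $J\in\shod$, using only shortness of $J\cup\{n\}$, and (ii) the resulting map is, up to homotopy, transverse to the Morse stratification of $g$, so that $\mu_*[\calr_d(J)]$ can be read off as a single dual Morse generator rather than an uncontrolled combination. Once $H^*\mu$ is shown to be a bijection between the two free bases in each relevant degree, it is automatically a ring isomorphism — there is no multiplicative content to check separately, since $H^*\mu$ is a ring homomorphism and a bijective ring homomorphism is a ring isomorphism. (It is worth noting that the ring structure of $H^*(\calr_d(\ell);\bbz)$ is the Stanley–Reisner-type ring of the simplicial complex $\shod$ on the classes dual to the vertices, which is exactly the shape needed downstream in Theorem~B; but for the present lemma that identification is a consequence, not an input.)
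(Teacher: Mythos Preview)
Your proposal has a genuine gap in step one, stemming from a misreading of the definition of $V_d(\ell)$. Membership in $V_d(\ell)$ requires $\sum_{i=1}^{n-1}\ell_i\rho(i)=t\,e_1$ for some $t\geq\ell_n$, i.e.\ the endpoint must lie \emph{on the positive $e_1$-axis}, not merely outside the ball of radius $\ell_n$. For a generic $\rho\in\calr_d(J)$ the vector $\sum_{i\in J}\ell_i\rho(i)$ has a nonzero component orthogonal to $e_1$, so $\calr_d(J)\not\subset V_d(\ell)$ and the ``inclusion-with-pushing'' you describe does not produce a map into $V_d(\ell)$ at all. The paper's map $\mu_J$ instead fixes $\rho$ on $J$ and rigidly rotates the block $\bar J=\{1,\dots,n-1\}-J$ so as to force the endpoint back onto the axis; shortness of $J\cup\{n\}$ is exactly what guarantees such a rotation with $t\geq\ell_n$ exists and is unique.

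More seriously, your plan to ``check compatibility on overlaps $\calr_d(J)\cap\calr_d(J')=\calr_d(J\cap J')$'' is precisely what \emph{fails}: for $J\subset J'$ the restriction of $\mu_{J'}$ to $\calr_d(J)$ is \emph{not} equal to $\mu_J$ (the block being rotated is different), and the paper says so explicitly. Most of the paper's proof is devoted to overcoming this: one shows that for each $J$ the space $\algn_J$ of admissible embeddings $\calr_d(J)\to V_d(\ell)$ is contractible, and then builds $\mu$ as a map out of a homotopy-colimit model $\hat\calr_d(\ell)$ indexed by the barycentric subdivision of $\shod$, using contractibility of the $\algn_J$ to extend over higher simplices. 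A Smale-type argument then shows $\hat\calr_d(\ell)\simeq\calr_d(\ell)$. Finally, the cohomological identification is much simpler than your Morse intersection scheme: one composes with the inclusion $\beta\:V_d(\ell)\hookrightarrow Z$, observes that $\beta\pcirc\mu_J$ is homotopic to the plain inclusion $\calr_d(J)\hookrightarrow Z$, deduces that $H_*\mu$ is injective, and concludes by the rank count you already noted.
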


\begin{proof}
Let $J\in\shod$. Elementary Euclidean geometry shows that,
for $\rho\in\calr_d(J)$, there is a unique $\hat\rho\in V_d(\ell)$ satisfying
the three conditions
\renewcommand{\labelenumi}{(\alph{enumi})}
\begin{enumerate}
\item $\hat\rho(i)=\rho(i)$ if $i\in J$ and
\item   $|\hat\rho(\bar J)|=1$, where $\bar J = \unmi - J$.
\item $\llangle{\hat\rho(i)}{e_1}>0$ if $i\in\bar J$.
\end{enumerate}
This defines an embedding $\mu_{\scr J}\:\calr_d(J)\to V_d(\ell)$
by $\mu_{\scr J}(\rho)=\hat\rho$. An example is drawn below with $n=6$ and $J=\{1,2,3\}$
(the last segments $\ell_n\rho(n)=-\ell_ne_1$ of the configurations are not drawn).

\sk{25}
\hskip 50mm
\begin{minipage}{6cm}
\setlength{\unitlength}{.05mm}
\begin{pspicture}(0,-2.1)(0,0)
\savebox{\boxJ}{
\put(0,0){\circle*{0.18}} 
\uput[-95](-0.1,0){$0$}
\psline[linewidth=1.5pt](0,0)(-0.3,0.4)(0.2,0.7)(0.5,1.2)
\put(-0.3,0.4){\circle*{0.14}} \put(0.2,0.7){\circle*{0.14}}
\put(0.5,1.2){\circle*{0.14}}
\psline[linewidth=0.5pt](0,0)(3.5,0)
\psline[linewidth=1.5pt,linestyle=dashed](0,0)(1.7,0)\put(1.7,0){\circle*{0.14}}
\uput[-85](1.7,0){$\ell_ne_1$}
}
\savebox{\boxbarJ}{
\psline[linewidth=1.5pt](0,0)(1.4,0)(2.8,0)
\put(1.4,0){\circle*{0.14}} \put(2.8,0){\circle*{0.14}}
} 
\put(-3,0){\usebox{\boxJ}}
\put(-2.5,1.2){\usebox{\boxbarJ}} 
\put(2,0){\usebox{\boxJ}}
\put(2.5,1.2){\rotatebox{-25.3}{\usebox{\boxbarJ}
 }}
\put(-1.6,1.8){$\rho$}\put(2.5,1.8){$\hat\rho$}
\psline[linewidth=0.3pt]{->}(-1,1.9)(2.3,1.9)
\psline[linewidth=0.3pt](-1,1.97)(-1,1.83)
\put(0.5,2.1){$\mu_{\scr J}$}
\end{pspicture}
\end{minipage}

We shall construct the map $\mu\:\calr_d(\ell)\to V_d(\ell)$ so that
its restriction to $\calr_d(J)$ is homotopic to $\mu_J$ for each $J\in\shod$.
Unfortunately, when $J\subset J'$, the restriction of $\mu_{J'}$ to $\calr_d(J)$
is not equal to  $\mu_J$ so the construction of $\mu$ requires some work.

For $J\in\shod$, consider the space of embeddings
$$
\algn_J=\big\{\alpha\:\calr_d(J)\to V_d(\ell)\mid \alpha(\rho) \hbox{ satisfies (a) and (c)}\big\}
$$
We claim that $\algn_J$ retracts by deformation onto its one-point subspace $\{\mu_J\}$. Indeed, let $\alpha\in\algn_J$ and let $\rho\in\calr_d(\ell)$. 
For $J\in\shod$, consider the space
$$
A_\rho = \{\zeta\:\bar J\to S^{d-1}\mid
\llangle{\zeta(i)}{e_1}>0 \hbox{ and }
\sum_{i\in J}\rho(i) +  \sum_{i\in\bar J}\zeta(i) = \lambda e_1 \hbox{ with } \lambda > 0  \} \, .
$$
Obviously, $\alpha(\rho)_{|\bar J} \in A_\rho$.
The space $A_\rho$ is a submanifold of $(S^{d-1})^{|\bar J|}$ which can be endowed with the
induced Riemannian metric. The parameter $\lambda$ provides a function 
$\lambda\:A_\rho \to \bbr$.
As usual, this is a Morse function with critical points the lined configurations
$\zeta(i)=\pm\zeta(j)$. But, as $\llangle{\zeta(i)}{e_1}>0$, the only critical point is
a maximum, the restriction of $\mu_J(\rho)$ to $\bar J$. Following the gradient line
at constant speed thus produces a deformation retraction of $A_\rho$ onto  $\mu_J(\rho)_{|\bar J}$.
The manifold $A_\rho$ and its gradient vector field depending smoothly on $\rho$,
this provides the required deformation retraction of $\algn_J$ onto $\{\mu_j\}$.

Let $\bcals_n$ be the first barycentric subdivision
of $\shod$. Recall that the vertices of $\bcals_n$
are the barycenters $\hat J \in |\shod|$ of the simplexes
$J$ of $\shod$, where $|\!\cdot\! |$ denotes the geometric realization.
A family $\{\hat J_0,\dots,\hat J_k\}$ of distinct barycenters forms a
$k$-simplex $\sigma\in \bcals_n$ if
$J_0\subset J_1\subset\cdots\subset J_k$. Set $\min\sigma= J_0$. For $J\in\shod$,
we also see $\hat J$ as a point of
$|\bcals_n|=|\shod|$.

Let us consider the quotient space:
\begin{equation}\label{defharrd}
\hat\calr_d(\ell) = \coprod_{\sigma \in \bcals_n} |\sigma|\times\calr_d(\min\sigma)
\Bigg/ \sim \ ,
\end{equation}
where $(t,\rho)\sim (t',\rho')$ if $\sigma<\sigma'$, $t=t'\in |\sigma|\subset  |\sigma'|$
and $\rho\mapsto\rho'$ under the inclusion
$\calr_d(\min\sigma)\hookrightarrow \calr_d(\min\sigma')$.
The projections onto the first factors in~\eqref{defharrd} provide a map
$q\:\hat\calr_d\to |\bcals_n|$ such that
$q^\mun(\hat J)=\{\hat J\}\times \calr_d(J)\approx \calr_d(J)$.
Over a $1$-simplex $e=\{\{J\},\{J,J'\}\}$ of $\bcals_n$,
one has
$q^\mun(\{J\})\approx\calr_d(J)$, $q^\mun(\{J'\})\approx\calr_d(J')$
and $q^\mun(|e|)$ is the mapping
cylinder of the inclusion  $\calr_d(J)\hookrightarrow \calr_d(J')$.

We now define a map $\hat\mu\:\hat\calr_d \to V_d(\ell)$
by giving its restriction
$$
\hat\mu^{k}\:q^\mun(|\bcals_d(\ell)^k|\to V_d(\ell)
$$
over the $k$-skeleton of $\bcals_n$. We proceed by induction on $k$.
The restriction of $\hat\mu$ to $q^\mun(\hat J)=\calr_d(J)$ is equal
to $\mu_J\in \algn_J$. This defines $\hat\mu^{0}$.
For an edge $e=\{\{J\},\{J,J'\}\}$, we use that
$\algn_{J}$ is contractible, as seen above.
The restriction of $\mu_{J'}$ to $\calr_d(J)$ is thus homotopic to
$\mu_J$ and we can use a homotopy to extend $\hat\mu^{0}$ over $|e|$.
Thus $\hat\mu^{1}$ is defined.
Suppose that $\hat\mu^{k-1}$ is defined. Let
$\sigma=\{\hat J_0,\dots,\hat J_k\}$ be a a $k$-simplex of $\bcals_d(\ell)$
with $\min\sigma=J_0$ and with boundary ${\rm Bd\,}\sigma$. As
$\algn_{J_0}$ is contractible, the restriction of $\hat\mu^{k-1}$ to
$q^\mun(|{\rm Bd\,}\sigma|)$ extends to $q^\mun(|\sigma|)$. This process permits us to define  $\hat\mu^{k}$.

Now the projections to the second factors in~\eqref{defharrd} give rise to a surjective map
$p\:\hat\calr_d(\ell)\to\calr_d(\ell)$. Let $x\in\calr_d(\ell)$. Let $J\in\shod$
minimal such that $x\in\calr_d(J)$. Then
$$
p^\mun(\{x\}) = |{\rm Star}(\hat J,\bcals_n)| \times  \{x\}
$$
is a contractible polyhedron. The preimages of points of $p$ are then all contractible and locally contractible,
which implies that $p$ is a homotopy equivalence \cite{Sm}. Using a homotopy
inverse for $p$ and the map $\hat\mu$, we get a map $\mu\:\calr_d(\ell)\to V_d(\ell)$.

For $J\in\shod$,
let us compose $\mu_J$ with the inclusion
$\beta\:V_d(\ell)\hookrightarrow Z$.
When $\rho\in\calr_d(J)$, the common value $\hat\rho(i)$ for $i\notin J$ is
not equal to $-(e_1,e_1,\dots,e_1)$. Using arcs of geodesics in $S^{d-1}$ enables us to construct
a homotopy from $\beta\pcirc\mu_J$ to the inclusion of $\calr_d(J)$ into
$Z$. This implies
that $H_*\mu\:H_*(\calr_d(\ell);\bbz)\to H_*(V_d(\ell);\bbz)$
is injective. By \corref{Morse-Cor} and \eqref{rkHrd},
$H_*\mu$ is an isomorphism. Hence,
$H^*\mu\: H^*(V_d(\ell);\bbz)\to H^*(\calr_d(\ell);\bbz)$ is a ring isomorphism.
\end{proof}

\begin{Remark}\rm
When $d\geq 3$, \lemref{KinV} implies that $\mu\:\calr_d(\ell)\to V_d(\ell)$
is a homotopy equivalence, since the spaces under consideration are simply connected.
We do not know if $\mu$ is also a homotopy equivalence when $d=2$.
\end{Remark}

\section{Proof of Theorm B}\label{SPthB}

Theorem B is a direct consequence of Propositions~\ref{LHCViso} and~\ref{togub}
below.

\begin{Proposition}\label{LHCViso}
Let $\ell\in\bbr_{>0}^n$ be a generic length-vector which is dominated.
Then the inclusion $\cha{n}{d}(\ell)\subset V_d(\ell)$
induces an injective ring homomorphism
\begin{equation}\label{LHCViso-eq}
H^*(\calr_d(\ell);\bbz)\approx
H^*(V_d(\ell);\bbz)\hookrightarrow
H^*(\cha{n}{d}(\ell);\bbz) \, .
\end{equation}
When $d\geq 3$ its image is equal to the subring $H^{(d-1)*}(\cha{n}{d}(\ell);\bbz)$.
\end{Proposition}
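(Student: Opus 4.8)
The proposition makes two claims about the restriction homomorphism $j^*\colon H^*(V_d(\ell))\to H^*(\chav)$ induced by $j\colon\chav=\cha{n}{d}(\ell)\hookrightarrow V_d(\ell)$ (throughout, (co)homology has $\bbz$ coefficients): that $j^*$ is injective, and that for $d\geq 3$ its image is the subring $H^{(d-1)*}(\chav;\bbz)$. Since $H^*\mu\colon H^*(V_d(\ell))\to H^*(\calr_d(\ell))$ is a ring isomorphism by \lemref{KinV} and $j^*$ is a ring homomorphism, it is enough to analyse $j^*$, and the plan is to do so through the long exact cohomology sequence of the pair $(V_d(\ell),\chav)$, using Lefschetz duality to locate $H^*(V_d(\ell),\chav)$.

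First I would assemble the numerical input. The space $V_d(\ell)$ is a compact oriented manifold of dimension $m=(n-2)(d-1)$ with $\partial V_d(\ell)=\chav$; orientability holds because $V_d(\ell)$ is a region with boundary inside the submanifold of $Z\approx(S^{d-1})^{n-1}$ on which $\sum_{i<n}\ell_i\rho(i)$ is parallel to $e_1$, a submanifold with trivial normal bundle. By \corref{Morse-Cor}, $H^*(V_d(\ell))$ is free and concentrated in the degrees $s(d-1)$, with ${\rm rank\,}H^{s(d-1)}(V_d(\ell))=a_s$. Lefschetz duality gives $H^j(V_d(\ell),\chav)\cong H_{m-j}(V_d(\ell))$; since homology agrees with cohomology here by freeness and $m\equiv 0\pmod{d-1}$, this vanishes whenever $j\not\equiv 0\pmod{d-1}$ and is free otherwise. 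Finally, \thref{T.cohcha} gives $H^*(\chav)$: it is free, with ${\rm rank\,}H^{s(d-1)}(\chav)=a_s$ and ${\rm rank\,}H^{s(d-1)-1}(\chav)=a_{n-s-2}$, and it vanishes in all remaining degrees.

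Now suppose $d\geq 3$, so that $d-1\geq 2$ and the residues $-1,0,1$ modulo $d-1$ are pairwise distinct. Fix $s$ and look at the portion of the exact sequence of $(V_d(\ell),\chav)$ around degree $s(d-1)$: its neighbours $H^{s(d-1)-1}(V_d(\ell))$ (degree $\equiv -1$) and $H^{s(d-1)+1}(V_d(\ell),\chav)$ (degree $\equiv 1$) both vanish by the preceding paragraph, so it reduces to the exact sequence
$$
0\to H^{s(d-1)-1}(\chav)\to H^{s(d-1)}(V_d(\ell),\chav)\stackrel{\iota^*}{\longrightarrow} H^{s(d-1)}(V_d(\ell))\stackrel{j^*}{\longrightarrow} H^{s(d-1)}(\chav)\to 0 .
$$
Hence $j^*$ is surjective in degree $s(d-1)$, and since $\ker j^*={\rm im\,}\iota^*$ we get ${\rm rank\,}({\rm im\,}\iota^*)={\rm rank\,}H^{s(d-1)}(V_d(\ell))-{\rm rank\,}H^{s(d-1)}(\chav)=a_s-a_s=0$; being a torsion subgroup of the free abelian group $H^{s(d-1)}(V_d(\ell))$, ${\rm im\,}\iota^*$ is therefore trivial. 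So $\iota^*=0$ and $j^*$ is an isomorphism in degree $s(d-1)$. As $H^*(V_d(\ell))$ vanishes in all other degrees, $j^*$ is injective with image $\bigoplus_s H^{s(d-1)}(\chav)$; and by \thref{T.cohcha} the remaining nonzero groups of $H^*(\chav)$ sit in degrees $\equiv -1\pmod{d-1}$, so this image is precisely the subring $H^{(d-1)*}(\chav;\bbz)$. Combined with the ring isomorphism $H^*\mu$ of \lemref{KinV}, this establishes the proposition for $d\geq 3$.

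The remaining case $d=2$, where injectivity of $j^*$ is still asserted, is the step I expect to be the main obstacle. For $d=2$ every degree is a multiple of $d-1=1$, the long exact sequence no longer breaks up, $j^*$ is genuinely not onto, and the rank count above gives no control of $\ker j^*$. What one still needs is that $H_*(\chav)\to H_*(V_2(\ell))$ is surjective --- equivalently that $\iota^*=0$, equivalently that $\partial\colon H_*(V_2(\ell),\chav)\to H_{*-1}(\chav)$ is injective. I would try to obtain this from the $\tau$-equivariant Morse theory of \cite{FS} that already underlies \corref{Morse-Cor} and \lemref{KinV}: the Morse function $g\colon V_2(\ell)\to\bbr$ of \proref{Morse} is $\tau$-invariant, its critical set equals the fixed point set $V_2(\ell)^\tau$, $\chav$ is a regular level lying above all critical values, and the submanifolds $\mu_J(\calr_2(J))$ that represent a basis of $H_*(V_2(\ell))$ are $\tau$-invariant; the aim would be to show that these basis classes already come from $H_*(\chav)$. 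Making this precise is where I expect the genuine difficulty to lie.
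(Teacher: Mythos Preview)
Your argument for $d\geq 3$ is correct, but the route differs from the paper's. The paper does not use Lefschetz duality on the pair $(V_d(\ell),\chav)$ at all; instead it notes that the inclusion $\chav\hookrightarrow Z$ factors through $V_d(\ell)$ and invokes a fact already established inside the proof of \thref{T.cohcha}, namely that $H^{s(d-1)}(Z;\bbz)\to H^{s(d-1)}(\chav;\bbz)$ is surjective (this came from the exact sequence of $(Z,Z')$ together with Poincar\'e--Lefschetz duality on the ambient manifold $Z$). Surjectivity of $j^*$ in degree $s(d-1)$ then follows by factorisation, and the rank comparison with \corref{Morse-Cor} finishes exactly as in your argument. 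Your approach trades the detour through $Z$ for duality intrinsic to $V_d(\ell)$; this is tidier in that it does not reach back into the proof of \thref{T.cohcha}, but it needs orientability of $V_d(\ell)$, which for $d\geq 3$ is immediate since the Morse function $g$ of \proref{Morse} gives a CW structure with cells only in dimensions divisible by $d-1\geq 2$, so $V_d(\ell)$ is simply connected. One small slip: for $d=3$ the residues $-1$ and $1$ coincide ${\rm mod\,}(d-1)=2$; what you actually need and use is only that $\pm 1\not\equiv 0$, which holds for all $d\geq 3$.

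As for $d=2$, your caution is warranted, and the paper is in the same position: its written proof relies on \thref{T.cohcha}, which is stated only for $d\geq 3$, so the given argument does not cover $d=2$ either. The injectivity for $d=2$ is the planar statement from \cite{FHS}, and the paper's later comments acknowledge this.
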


\begin{proof}
By \thref{T.cohcha} and its proof, the homomorphism
$H^{s(d-1)}(Z^n_d;\bbz)\to H^{s(d-1)}(\cha{n}{d}(\ell);\bbz)$
induced by the inclusion is surjective and
${\rm rank\,} H^{s(d-1)}(\cha{n}{d}(\ell);\bbz) = a_s$
(recall that $Z^n_d=Z$).
As the inclusion
$\cha{n}{d}(\ell)\subset Z^n_d$ factors through $V_d(\ell)$
the homomorphism $H^{s(d-1)}(V_d(\ell);\bbz)\to H^{s(d-1)}(\cha{n}{d}(\ell);\bbz)$
induced by the inclusion is also surjective.
As ${\rm rank\,} H^{s(d-1)}(V_{d}(\ell);\bbz) = a_s$ by \corref{Morse-Cor},
this proves the proposition.
\end{proof}


\begin{Remark}\rm
\proref{LHCViso} is wrong if $\ell$ is not dominated. For example, let
$\ell=(1,1,1,\varepsilon)$ with $\varepsilon<1$.
Then $a_1=3$, so $H^{d-1}(V_d(\ell);\bbz)\approx\bbz^3$. But, for $d=3$,
we saw in \remref{Rcohchawrong} that $H^2(\cha{4}{3}(\ell);\bbz)=\bbz_2$.
\end{Remark}


As in the introduction, consider the polynomial ring $\bbz_2[X_1,\dots,X_{n-1}]$ with
formal variables $X_1,\dots,X_{n-1}$.
If $J\subset\unmi$, we denote by $X_{J}\in \bbz_2[X_1,\dots,X_{n-1}]$
the monomial $\prod_{j\in J}X_j$.
Let $\cali(\shoti(\ell))$ be the ideal of $\bbz_2[X_1,\dots,X_{n-1}]$ generated
by the squares $X_i^2$ of the variables and the monomials $X_{J}$ for $J\notin\shoti(\ell)$ (non-simplex monomials).

\begin{Proposition}\label{togub}
The ring $H^*(\calr_d(\ell);\bbz_2)$ is isomorphic to the quotient ring\\
$\bbz_2[X_1,\dots,X_{n-1}]\big/\cali(\shoti(\ell))$
(The degree of $X_i$ being $d-1$).
\end{Proposition}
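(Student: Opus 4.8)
The plan is to exhibit $H^*(\calr_d(\ell);\bbz_2)$ as a quotient of the polynomial ring $\bbz_2[X_1,\dots,X_{n-1}]$ by identifying a natural set of ring generators in degree $d-1$ and computing all their products. First I would note that $\calr_d(\ell)=\bigcup_{J\in\shod}\calr_d(J)$ is a union of subtori of $Z=Z^n_d$, where $\calr_d(J)\approx(S^{d-1})^J$ consists of those $\rho$ that are $e_1$ off $J$. For each $i\in\unmi=\{1,\dots,n-1\}$ the singleton $\{i\}$ lies in $\shoti(\ell)$ (since $\{n\}$ is short and $\ell$ is dominated, so $\{i,n\}$ is short whenever $\{i\}$ is a vertex; and every vertex of $\shoti$ arises this way), so $\calr_d(\{i\})\approx S^{d-1}$ sits inside $\calr_d(\ell)$, and its fundamental class is dual to a cohomology class $x_i\in H^{d-1}(\calr_d(\ell);\bbz_2)$. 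The map $\bbz_2[X_1,\dots,X_{n-1}]\to H^*(\calr_d(\ell);\bbz_2)$, $X_i\mapsto x_i$, is the candidate; I must show it is surjective with kernel exactly $\cali(\shoti(\ell))$.

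Next I would set up the combinatorial bookkeeping. By the discussion preceding \lemref{KinV}, $\{[\calr_d(J)]\mid J\in\shod\}$ is a free $\bbz$-basis of $H_*(\calr_d(\ell);\bbz)$, hence (by the universal coefficient theorem and the freeness already established) $\{[\calr_d(J)]\mid J\in\shod\}$ is a $\bbz_2$-basis of $H_*(\calr_d(\ell);\bbz_2)$, and $H^{k(d-1)}(\calr_d(\ell);\bbz_2)$ has dimension $a_k$ with a dual basis $\{y_J\mid J\in\shod,\ |J|=k\}$. For $J=\{j_1,\dots,j_k\}\in\shoti$ the subtorus $\calr_d(J)$ is a product of the circles $\calr_d(\{j_r\})$ inside the product structure, so a standard Künneth/cup-product computation on $(S^{d-1})^{n-1}$ (or rather: intersection theory in $\calr_d(\ell)$, using that $\calr_d(\{j_1\})\cap\cdots\cap\calr_d(\{j_k\})$ meets transversally in a copy of $\calr_d(J)$) gives $x_{j_1}\cdots x_{j_k}=y_J$ up to sign, i.e. $=y_J$ over $\bbz_2$. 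In particular these monomials are nonzero and linearly independent for $J\in\shoti$, so the generating map is surjective, and $X_J\mapsto 0$ for $J\notin\shoti$: if $J$ is not a simplex of $\shoti$ then $\calr_d(J)$ is not a submanifold sitting in $\calr_d(\ell)$ and the corresponding product of classes must vanish — concretely, one can push one of the factor spheres off the others by an isotopy of $Z$ exactly as in the proof of \lemref{intnum}, making the representing cycles disjoint. Likewise $X_i^2\mapsto x_i^2=0$ because $d-1\geq 2$ forces $x_i^2\in H^{2(d-1)}$ to be a multiple of the even-degree part, and self-intersection of an $S^{d-1}$ inside its own normal-bundle neighbourhood in $\calr_d(\ell)$ vanishes mod $2$ (the normal bundle of $\calr_d(\{i\})\subset\calr_d(\ell)$ is a sum of copies of the nontrivial line-ish summands only when further vertices are adjacent, but the Euler-type contribution is killed since $\{i\}\cup\{i\}$ is not a valid simplex — more simply, $\calr_d(\{i\})$ can be isotoped off itself in the ambient $(S^{d-1})^{n-1}$). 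Hence the map descends to a surjection $\bbz_2[X_1,\dots,X_{n-1}]/\cali(\shoti(\ell))\twoheadrightarrow H^*(\calr_d(\ell);\bbz_2)$.

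Finally I would argue this surjection is an isomorphism by a dimension count. The quotient ring $\bbz_2[X_1,\dots,X_{n-1}]/\cali(\shoti(\ell))$ is by definition spanned by the square-free monomials $X_J$ with $J\in\shoti$, together with $1$; that is exactly the Stanley–Reisner ring of the simplicial complex $\shoti(\ell)$ with variables in degree $d-1$, so its dimension in degree $k(d-1)$ equals the number of $(k-1)$-simplices of $\shoti$, which is $a_k$ (the number of $J\in\shod$ with $|J|=k$), and it vanishes in all other degrees. This matches $\dim H^{k(d-1)}(\calr_d(\ell);\bbz_2)=a_k$ from \eqref{rkHrd}, so the surjection is a graded isomorphism of rings.

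The main obstacle is the product computation $x_{j_1}\cdots x_{j_k}=y_J$ and the vanishing $X_J\mapsto 0$ for non-simplices: these require care because $\calr_d(\ell)$ is not a manifold (only a union of manifolds glued along subtori), so Poincaré-duality intersection arguments do not apply directly to $\calr_d(\ell)$ itself. The clean way around this, which I would adopt, is to do all intersection theory inside the ambient smooth manifold $\calr_d(\unmi)\approx(S^{d-1})^{n-1}$ (the full torus, $J=\unmi$ being allowed as an ambient space even though it need not be short), where $H^*$ is the exterior-type algebra $\bbz_2[X_1,\dots,X_{n-1}]/(X_i^2)$ by Künneth, and then observe via the explicit basis $\{[\calr_d(J)]\}$ that the inclusion $\calr_d(\ell)\hookrightarrow\calr_d(\unmi)$ induces on $H_*$ the inclusion of the span of $\{[\calr_d(J)]:J\in\shod\}$, dually that $H^*(\calr_d(\unmi);\bbz_2)\to H^*(\calr_d(\ell);\bbz_2)$ is the quotient killing all $X_J$ with $J\notin\shoti$. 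Since the $X_i$ in the ambient torus manifestly multiply as claimed, the relations transport and one gets precisely the Stanley–Reisner quotient.
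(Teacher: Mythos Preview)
Your proposal is correct, and the route you settle on in your final paragraph is exactly the paper's proof: embed $\calr_d(\ell)$ in the ambient $Z\approx(S^{d-1})^{n-1}$, identify $H^*(Z;\bbz_2)\cong\bbz_2[X_1,\dots,X_{n-1}]/(X_i^2)$ via K\"unneth, observe that $H_*(\calr_d(\ell))\to H_*(Z)$ is the inclusion of the span of $\{[\calr_d(J)]:J\in\shod\}$, and dualise to see that restriction on cohomology is the quotient by the non-simplex monomials. Your first two paragraphs, attempting intersection theory directly in the non-manifold $\calr_d(\ell)$, are a detour you yourself correctly abandon; once you pass to the ambient torus, the product formula $x_{j_1}\cdots x_{j_k}=y_J$, the vanishing of $x_i^2$, and the vanishing of non-simplex monomials all become immediate from K\"unneth, so your separate isotopy and normal-bundle arguments are unnecessary.

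One small inaccuracy: it is not true that every $\{i\}$ with $i\in\unmi$ lies in $\shoti(\ell)$, even when $\ell$ is dominated (e.g.\ $\ell=(1,1,2,3)$ has $\{1,4\}$ long). This does not damage the argument, since for such $i$ the monomial $X_i$ already lies in $\cali(\shoti(\ell))$ and correspondingly $x_i=0$ in $H^*(\calr_d(\ell);\bbz_2)$; but your parenthetical justification is circular and should simply be dropped. Your dimension count via \eqref{rkHrd} is a perfectly good way to conclude the surjection is an isomorphism; the paper instead reads off the kernel directly from the dual-basis description, which amounts to the same thing.
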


\begin{proof}
The coefficients of the (co)homology groups are $\bbz_2$ and
are omitted in the notation.
Consider the inclusion $\beta\:V_d(\ell)\hookrightarrow Z=Z^n_d$.
The map $\rho\mapsto (\rho(1),\dots,\rho(n-1))$ is a diffeomorphism
from $Z$ to $(S^{d-1})^{n-1}$. Using this identification,
the homology $H_*(Z)$ is the $\bbz_2$-vector space with basis
the classes
$[\calr_d(I))]$ for $I\subset\unmi$.
(To compare with the basis of \lemref{zjbase}, the submanifolds $R_d(J)$ and $W_{\bar J}$
are isotopic, where $\bar J$ is the complement of $J$ in $\unm$.)
The homology $H_*(\calr_d(\ell))$ has basis
$[\calr_d(J)]$ for $J\in\shoti(\ell)$. The homomorphism
$H_*\beta\:H_*(\calr_d(\ell))\to H_*(Z)$ is induced
by the inclusion of the above bases. Hence, $H_j\beta\:H_j(\calr_d(\ell))\to H_j(Z)$
is injective and ${\rm coker\,}H_j$ is freely generated by the classes 
$[\calr_d(J)]$ for $|J|=j$ and $J\notin\shoti(\ell)$.

In particular, the classes $[\calr_d(\{i\})]$, for $i=1,\dots,n-1$,
form a basis of $H_{d-1}(Z)$.
Let $\{\xi_1,\dots,\xi_{n-1}\}\in H^{d-1}(Z)=
{\rm hom}(H_{d-1}(Z),\bbz_2)$ be the Kronecker dual basis.
By the K\"unneth formula,
the correspondence $X_i\mapsto\xi_i$ extends to a ring isomorphism
$\bbz_2[X_1,\dots,X_{n-1}]\stackrel{\approx}{\to} H^*(Z)$. 
The the family of monomials $\{X_J\mid J\subset\unmi\}$ is sent 
to the the Kronecker dual basis to $\{[\calr_d(J)]\mid J\subset\unmi\}$.
The properties
of $H_*\beta$ mentioned above then imply that the composed ring homomorphism
$$
\bbz_2[X_1,\dots,X_{n-1}]\stackrel{\approx}{\to} H^*(Z)
\stackrel{H^*\beta}{\longrightarrow}  H^*(\calr_d(\ell))
$$
is surjective with kernel $\cali(\shoti(\ell))$.
\end{proof}

The proof of Theorem~B is thus complete which, as seen in the introduction, 
implies Theorem~A.

\section{Comments}\label{S.comment}

\begin{ccote}\label{C.nota}\rm
The authors are trying to unify the notations used for the various posets of short subsets.
Our notation $\shoti\subset\shod\subset\cals$ are identical to that of \cite{Sz}. In \cite{HK},
$\shod$ is denoted by $\cals_n$ but, in the more recent papers \cite{HR, Ha2},
$\cals_n=\{J\in \cals\mid n\in J\}$. This is not used here but could have been naturally in
e.g. \thref{T.cohcha}.
\end{ccote}

\begin{ccote}\rm
When $d=2$, Assertion \eqref{LHCViso-eq} still holds true but not the last assertion of 
\proref{LHCViso}. The image $\calj^n_2(\ell)$ of the homomorphism 
$H^*(V_2(\ell);\bbz)\to H^*(\cha{n}{2}(\ell);\bbz)$ induced by the inclusion is just some
subring of $H^*_{(1)}(\cha{n}{2}(\ell);\bbz)$, where the latter denotes the
subring of $H^*(\cha{n}{2}(\ell);\bbz)$ generated by the elements of degree $1$. 
For length-vectors such that $\calj^n_2(\ell)=H^*_{(1)}(\cha{n}{2}(\ell);\bbz)$,
our proof of Theorem~B (and then of Theorem~A) holds. Such length-vectors are called
normal in~\cite{FHS}. 
\end{ccote}

\begin{ccote}\rm
The ring structure of $H^*(\cha{n}{d}(\ell);\bbz_2)$ is necessary to differentiate the chain spaces
up to diffeomorphism:
the Betti numbers are not enough. The first example occurs for $n=6$ with
$\ell=(1,1,1,2,3,3)$ and $\ell'=(\varepsilon,1,1,1,2,2)$, where $0<\varepsilon<1$.
(The chamber of $\ell$ is $\langle 632,64 \rangle$ and that of $\ell'$ is $\langle 641 \rangle$
, see \cite[Table~C]{Ha2}.)
Then, $\shoti(\ell)$ and $\shoti(\ell')$ are both graphs with $4$ vertices and $3$ edges.
Therefore, $a_s(\ell)=a_s(\ell')$ for all $s$ which, by \thref{T.cohcha}, implies that
$\cha{6}{d}(\ell)$ and $\cha{6}{d}(\ell')$ have the same Betti numbers. However,
$\shoti(\ell)$ and $\shoti(\ell')$ are not poset isomorphic:
the former is not connected while the latter is. 
\end{ccote}

\begin{ccote}\rm
It would be interesting to know if, in Theorem A, the ring $\bbz_2$ could be replaced by any
other coefficient ring. 
In the corresponding result for spatial polygon spaces $\nua{n}{3}(\ell)$, which are distinguished
by their $\bbz_2$-cohomology rings if $n>4$ \cite[Theorem~3]{FHS}, 
the ring $\bbz_2$ cannot be replaced by $\bbr$.
Indeed, $\nua{5}{3}(\varepsilon,1,1,1,2)\approx \bbc P^2\sharp\bar \bbc P^2$ while
$\nua{5}{3}(\varepsilon,\varepsilon,1,1,1)\approx S^2\times S^2$ 
($\varepsilon$ small; see \cite[Table~B]{Ha2}). 
These two manifolds have non-isomorphic $\bbz_2$-cohomology rings but isomorphic
real cohomology rings. 
One can of course replace $\bbz_2$ by $\bbz$ in Theorem~A
since, by \thref{T.cohcha}, $H^*(\cha{n}{d}(\ell);\bbz)$ determines $H^*(\cha{n}{d}(\ell);\bbz_2)$ 
when $\ell$ is dominated.
\end{ccote}

\begin{ccote}\rm
We do not know if Theorem A is true for generic length vectors which are not dominated.
The techniques developped in \cite{FF} might useful to study this more general case.
\end{ccote}

\begin{ccote}\rm
Let $K$ be a flag simplicial complex (i.e. if $K$ contains a graph $L$ isomorphic
to the $1$-skeleton of a $r$-simplex, then $L$ is
contained in a $r$-simplex of $K$).
Then the complex $\calr_1(K)$ is the Salvetti complex of the right-angled
Coxeter group determined by the $1$-skeleton of $K$, see \cite{Cha}.
\end{ccote}

\sk{3}\noindent{\bf Acknowledgements: }\rm
The second author would like to thank E. Dror-Farjoun for useful discussions.


\end{document}